\newtheorem{theorem}{Theorem}
\newtheorem{lemma}[theorem]{Lemma}
\newtheorem{corollary}[theorem]{Corollary}
\theoremstyle{definition}
\newcommand \NN{\mathbbm{N}}
\newcommand \ZZ{\mathbbm{Z}}
\newcommand \Iso{{\rm Iso }}
\newcommand \dist{{\rm dist}}
\newcommand \diam{{\rm diam}}
\newcommand \supp{{\rm supp}}
\newcommand {\gen}[1]{\left\langle #1 \right\rangle}
\newcommand {\clg}[1]{\overline{\left\langle #1 \right\rangle}}
\begin{document}

\title{Abelian pro-countable groups and orbit equivalence relations}
\author{Maciej Malicki}

\address{Institute of Mathematics, Polish Academy of Sciences, Sniadeckich 8, 00-956, Warsaw, Poland, and Lazarski University, Warsaw, Poland}
\email{mamalicki@gmail.com}
\subjclass[2000]{(primary) 54H11, 54H05}
\keywords{Non-locally compact Polish groups, abelian groups, equivalence relations}
 
\begin{abstract}
We study abelian groups that can be defined as Polish pro-countable groups, as non-archimedean groups with an invariant metric or as quasi-countable groups, i.e., closed subdirect products of countable, discrete groups, endowed with the product topology. 

 We show, among other results, that for every non-locally compact, abelian quasi-countable group $G$ there exists a closed $L \leq G$, and a closed, non-locally compact $K \leq G/L$ which is a direct product of discrete, countable groups. As an application we prove that for every abelian Polish group $G$ of the form $H/L$, where $H,L \leq \Iso(X)$ and $X$ is a locally compact separable metric space (e.g., $G$ is abelian, quasi-countable), $G$ is locally compact iff every continuous action of $G$ on a Polish space $Y$ induces an orbit equivalence relation that is reducible to an equivalence relation with countable classes.
\end{abstract}

\maketitle
\section{Introduction}

We study groups that can be characterized in the following three ways. First, they can be defined as Polish, pro-countable groups, i.e., inverse limits of countable families of discrete, countable groups. Also, they can be viewed as non-archimedean groups, i.e., Polish groups with a neighborhood basis at the identity consisting of open subgroups, that admit an invariant metric. Finally, they can be defined as \emph{quasi-countable} groups, i.e., closed, countable subdirect products of discrete, countable groups, endowed with the product topology. In this paper, we will be referring to them as quasi-countable groups because this definition is the most natural, and the most general in case we want to put some additional requirements on their structure.
%
%

As a matter of fact, quasi-countable groups have been studied before (see, e.g., \cite{GaXu}, \cite{HjKe}, \cite{HoMo}.) However, it seems that no one has considered them so far \emph{as} quasi-countable groups, that is, taking advantage of  the fact that they are built out of countable groups in a nice way. Our approach allows for applying tools coming from the theory of discrete groups; for example, we use a classical theorem of Kulikov saying that for every abelian, torsion group $G$ there exists a direct sum of cyclic groups $L \leq G$ such that $K/L$ is divisible.
Most of the results presented in this paper concern non-locally compact, abelian, quasi-countable groups. In particular, we prove a structure theorem to the extent that for every non-locally compact, abelian quasi-countable group $G$ there exists a closed $L \leq G$, and a closed $K \leq G/L$ such that $K$ is a non-locally compact \emph{direct} product of discrete, countable groups. 

 
As an application of our study, we consider connections between the structure of abelian, quasi-countable groups and properties of orbit equivalence relations induced by their continuous actions. Recall that for Polish spaces $X,Y$ and equivalence relations $E,F$ defined on $X$, $Y$, respectively, we say that $E$ is \emph{Borel reducible} to $F$ if there exists a Borel mapping $f: X \rightarrow Y$ such that
\[ xEy \Leftrightarrow f(x)Ff(y) \mbox{ for } x,y \in X \]
(see \cite{Gao} for more information on this notion.) An important class of equivalence relations are orbit equivalence relations, that is, relations of the form 
\[ x E_\alpha y \Leftrightarrow \exists g \in G \, ( \alpha(g,x)=y) \mbox{ for } x,y \in X, \]
where $\alpha$ is a continuous action of $G$ on a Polish space $X$. It turns out that there are deep connections between the structure of $G$ and properties of orbit equivalence relations induced by $G$. For example, S.Solecki \cite{Sol} proved that  a Polish group $G$ is non-compact  if and only if there exists a continuous action $\alpha$ of $G$ such that $E_0$ is reducible to $E_\alpha$ (where $E_0$ is the relation of eventual identity on the space $2^\NN$ of all $0-1$ sequences.) Even more interestingly, A.Kechris  \cite{Ke} proved that if $G$ is locally compact, then every orbit equivalence relation induced by $G$ is reducible to an equivalence relation with countable classes. A natural question arises whether the converse holds. This problem has been remaining 'stubbornly open', as G.Hjorth put it, since the early 1990s.

In this paper we show that the converse does hold for abelian quasi-countable groups. Then, applying certain results of S.Gao, A.Kechris, \\ A.Kwiatkowska, and S.Solecki, as well as the Pontryagin duality, we generalize it to Polish groups of the form $G/L$, where $G,L$ are closed subgroups of $\Iso(X)$ for some locally compact separable metric space $X$.

Let us mention that this partially complements a theorem proved by A.Thompson \cite{Tho}, which says that the converse of Kechris's theorem holds for non-cli groups, that is, Polish groups that do not admit a compatible, left-invariant complete metric. As Polish abelian groups are cli, Thompson's theorem does not apply in the present context.

\section{Notation and basic facts}
A topological space is \emph{Polish} if it is separable and completely metrizable. A topological group is Polish if its group topology is Polish. In the sequel, all discussed groups will be Polish, and all subgroups will be closed (equivalently: Polish.)

It is well known (see \cite[Theorem 2.2.10]{Gao}) that if $G$ is a Polish group, and $H$ is a closed (equivalently: Polish), normal subgroup of $G$, then $G/H$ is a Polish group in the quotient topology. The following fact seems to be standard but we could not find any reference for it.

\begin{lemma}
\label{leComClo}
Let $G$ be a non-locally compact Polish group, and let $H$ be a closed, normal subgroup of $G$. If $H$ is locally compact, then $G/H$ is non-locally compact.
\end{lemma}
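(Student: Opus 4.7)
The plan is to prove the contrapositive: assuming $G/H$ is locally compact, show that $G$ is locally compact, contradicting the hypothesis. This is a form of the classical theorem (of Pontryagin and Bourbaki) that a closed extension of one locally compact group by another is locally compact. Write $\pi\colon G\to G/H$ for the quotient map.

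First I would fix a compatible left-invariant metric $d$ on $G$ (which exists by Birkhoff--Kakutani), and pick $r>0$ so that $\overline{B_H(e,r)}$ is compact in $H$, and hence in $G$ as $H$ is closed. Using local compactness of $G/H$, fix an open neighborhood $\dot V$ of $\dot e$ with compact closure. By regularity of $G$, choose a symmetric open neighborhood $U$ of $e$ with $U\subseteq B_G(e,r/10)$, $\pi(\overline{UU^{-1}})\subseteq\overline{\dot V}$, and $\overline{UU^{-1}}\cap H\subseteq B_H(e,r)$; the last is arranged by separating $e$ from the closed set $H\setminus B_H(e,r)$ in $G$. I would then verify that $\overline U$ is compact, which contradicts non-local compactness of $G$.

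Working sequentially (by metrizability), take $(g_n)\subseteq U$; by compactness of $\overline{\dot V}$ pass to a subsequence with $\pi(g_n)\to\dot v$. Since $H$ is normal and $d$ is left-invariant, the function $x\mapsto d(x,H)$ descends to $G/H$, with sublevel sets near $\dot e$ forming a neighborhood basis of $\dot e$; as $d(g_n,H)\le d(g_n,e)<r/10$, one may pick $g\in\pi^{-1}(\dot v)$ with $d(g,e)<r/5$. By openness of $\pi$, there exist $h_n\in H$ with $g_nh_n\to g$; then $d(h_n,e)=d(g_nh_n,g_n)\le d(g_nh_n,g)+d(g,g_n)<r$ eventually, so $(h_n)\subseteq\overline{B_H(e,r)}$, which is compact. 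A convergent subsequence $h_{n_k}\to h$ yields $g_{n_k}=(g_{n_k}h_{n_k})h_{n_k}^{-1}\to gh^{-1}$, the desired convergent subsequence of $(g_n)$.

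The main obstacle is guaranteeing that $(h_n)$ lands in a precompact subset of $H$; this requires the radii in $G$, $H$, and the representative $g$ to be balanced carefully, and crucially depends on the normality of $H$ combined with the left-invariance of $d$ so that distance to $H$ descends compatibly with the quotient topology on $G/H$.
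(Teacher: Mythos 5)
Your proof is correct and follows essentially the same route as the paper's: assume $G/H$ is locally compact, fix a left-invariant metric, push a small sequence from $G$ into a compact neighborhood in $G/H$, choose a nearby coset representative $g$, lift the convergence to elements $g_nh_n\to g$ with $h_n\in H$, and use the triangle inequality plus left-invariance to trap the $h_n$ in a precompact subset of $H$, yielding a convergent subsequence of the original sequence and contradicting non-local compactness of $G$. The only differences are cosmetic (choice of radii, and some conditions on $U$ that are never actually used).
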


\begin{proof}
Suppose that $G/H$ is locally compact. Let $\pi$ be the projection of $G$ onto $G/H$. Fix a compatible left-invariant metric $d$ on $G$, an open neighborhood of the identity $U \subseteq  G/H$ whose closure is compact in $G/H$, and $\epsilon>0$ such that the closure of $B(e,3\epsilon) \cap H$ is compact, where $B(e,3\epsilon)$ is the open ball in $G$ with center at the identity $e$ and radius $3\epsilon$. 

Let $\{x_n \}$ be a sequence in $\pi^{-1}[U] \cap B(e,\epsilon)$. Then $\{ \pi(x_n) \}$ contains a subsequence that converges in $G/H$ to some $gH$ with $d(g,e) \leq \epsilon$. In particular, there exists a subsequence $\{y_n\}$ of $\{x_n\}$ such that $\dist(y_n,gH) \rightarrow 0$. 
Fix $h_n \in H$ such that $d(y_n,gh_n) \rightarrow 0$. Then, for sufficiently large $n$,
\[ d(e,y_n) \leq \epsilon, \, d(y_n,gh_n)=d(g^{-1}y_n,h_n)<\epsilon,\]
so we get
\[ g^{-1}y_n \in B(e,2\epsilon), \, h_n \in B(e,3\epsilon).\]
But the closure of $H \cap B(e,3\epsilon)$ is compact, so there is a convergent subsequence in $\{h_n\}$, and thus a convergent subsequence in $\{y_n\}$.
This implies that $G$ is locally compact, which is a contradiction.


\end{proof}  
 
A group $G$ is called \emph{non-archimedean} if it is Polish, and it has a neighborhood basis at the identity consisting of open subgroups. Equivalently, non-archimedean groups are closed subgroups of the group $S_\infty$ of all permutations of the natural numbers with the pointwise convergence topology.

Let $\mathcal{D}$ be a class of discrete, countable groups. A Polish group $G$ is called \emph{quasi}-$\mathcal{D}$ if it is a subdirect product of groups from $\mathcal{D}$, that is,
\[ G \leq \prod_n G_n, \]
where $G_n \in \mathcal{D}$ for $n \in \NN$, $\prod_n G_n$ is endowed with the product topology, and all the projections of $G$ on $G_n$ are surjective. Thus we can talk about quasi-countable groups, quasi-divisible groups, quasi-dsc groups (where dsc stands for `direct sum of finite cyclic groups'), etc. Similarly, a Polish group $G$ is called pro-$\mathcal{D}$ if it is the inverse limit of an inverse system of groups from $\mathcal{D}$. Clearly, every Polish pro-$\mathcal{D}$ group is quasi-$\mathcal{D}$.
  
Let  $G \leq \prod_n G_n$. For $g \in G$, by $g(n)$ we mean the value of $g$ on its $n$th coordinate, $\pi_n[G]$ denotes the projection of $G$ on $G_n$, $G_{\gen{n}}$ denotes the group
\[ G_{\gen{n}}=\{g \in G: g(k)=e \mbox{ for } k \leq n \},\]
and $G_{\gen{-1}}=G$. Clearly, in the product topology, the family $\{G_{\gen{n}}\}$ forms a neighborhood basis at the identity consisting of normal subgroups of $G$.

If, additionally, $\pi_n[G]=G_n$ for each $n$, we say that the family $\{G_n\}$ is \emph{adequate for} $G$. 

\begin{lemma}
\label{leCharPro}
Let $G$ be a Polish group. The following conditions are equivalent.

\begin{enumerate}
\item $G$ is quasi-countable,
\item $G$ is pro-countable,
\item $G$ has a countable neighborhood basis at the identity consisting of open, normal subgroups,
\item $G$ is non-archimedean, and there exists a compatible two-sided invariant metric on $G$. 
\end{enumerate}
\end{lemma}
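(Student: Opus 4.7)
The plan is to establish the cycle $(1) \Rightarrow (3) \Rightarrow (2) \Rightarrow (1)$ and then fold in $(4)$ via $(1) \Rightarrow (4) \Rightarrow (3)$. Several of the implications are essentially immediate. For $(1) \Rightarrow (3)$, the subgroups $G_{\gen{n}}$ identified just before the lemma already form a countable basis at the identity of open normal subgroups of $G$. For $(2) \Rightarrow (1)$, given a pro-countable presentation one replaces each factor by the (still countable, discrete) image $\pi_n[G]$ to arrange that the projections are surjective. For $(1) \Rightarrow (4)$, equipping each discrete $G_n$ with the bi-invariant $\{0,1\}$-valued metric $d_n$ and forming $d(g,h) = \sum_n 2^{-n} d_n(g(n),h(n))$ yields a compatible two-sided invariant metric on $\prod_n G_n$ that restricts to one on $G$, while non-archimedeanness is witnessed by the $G_{\gen{n}}$.

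The substantive direction is $(3) \Rightarrow (2)$. Given a countable neighborhood basis $\{V_n\}$ of open normal subgroups, I first replace it by the decreasing sequence of finite intersections. Each quotient $G/V_n$ is then discrete (since $V_n$ is open) and countable (since the cosets of $V_n$ form an open partition of the separable space $G$), so the canonical homomorphism $\phi : G \to \varprojlim_n G/V_n$ is well defined, continuous, and injective (as $\bigcap_n V_n = \{e\}$). The crux is surjectivity: for a coherent thread $(x_nV_n)$, the condition $x_n^{-1}x_m \in V_n$ for $m \geq n$ combined with normality of $V_n$ gives also $x_m x_n^{-1} \in V_n$, so the sequence $(x_n)$ is Cauchy in the two-sided (Raikov) uniformity on $G$; since Polish groups are Raikov-complete, $x_n \to x$ for some $x \in G$, and this $x$ maps to the prescribed thread. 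Finally, since $\varprojlim_n G/V_n$ is Polish (as a closed subspace of a countable product of countable discrete spaces), the open mapping theorem for Polish groups promotes the continuous bijection $\phi$ to a topological isomorphism.

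For $(4) \Rightarrow (3)$, the key observation is that a two-sided invariant metric $d$ is conjugation-invariant, so $g B(e,\delta) g^{-1} = B(e,\delta)$ for every $g$ and every $\delta>0$. Using non-archimedeanness together with second countability of $G$, one extracts a countable basis $\{V_n\}$ of open subgroups at the identity, and for each $n$ chooses $\delta_n > 0$ with $B(e,\delta_n) \subseteq V_n$. Then $B(e,\delta_n) \subseteq g V_n g^{-1}$ for every $g$, so the core $W_n := \bigcap_g g V_n g^{-1}$ is a normal subgroup containing an open neighborhood of the identity and is therefore itself open; the family $\{W_n\}$ is the required basis. The step I expect to require the most care is surjectivity in $(3) \Rightarrow (2)$: it is precisely the normality of the $V_n$ that converts inverse-system coherence into a two-sided Cauchy condition, and Raikov completeness of Polish groups that then produces the limit.
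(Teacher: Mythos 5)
Your argument is correct, and every implication checks out: the coherence-plus-normality computation showing that a thread $(x_nV_n)$ yields a two-sided Cauchy sequence is right (from $x_n^{-1}x_m\in V_n$ and normality one gets $x_mx_n^{-1}\in V_n$), Raikov completeness of Polish groups does produce the limit, and the conjugation-invariance of balls for a bi-invariant metric does make the normal core $\bigcap_g gV_ng^{-1}$ open. The route, however, is organized quite differently from the paper's. The paper proves the cycle $(1)\Rightarrow(2)\Rightarrow(3)\Rightarrow(4)\Rightarrow(1)$, treats $(1)\Rightarrow(2)$ as essentially immediate (it just exhibits the inverse system $\{G/N_n,f_n\}$ built from the open normal subgroups $G_{\gen{n}}$, without verifying that the canonical map onto the inverse limit is surjective), and outsources $(3)\Rightarrow(4)$ and $(4)\Rightarrow(1)$ entirely to a cited standard fact (Gao, Exercise 2.1.4) that a Polish group has a neighborhood basis at the identity of open normal subgroups iff it is non-archimedean with a compatible invariant metric. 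You instead prove the whole lemma from scratch: your $(3)\Rightarrow(2)$ supplies exactly the completeness argument the paper elides, and your $(4)\Rightarrow(3)$ via normal cores and your $(1)\Rightarrow(4)$ via the weighted-sum metric reprove the cited exercise. The paper's version is shorter and leans on the literature; yours is self-contained and makes visible where completeness of $G$ is actually used, which is the genuine mathematical content of the equivalence. Both are valid.
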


\begin{proof}
To show (1)$\Rightarrow$(2), fix a countable family $\{ N_n\}$ of open normal subgroups of $G$ that form a neighborhood basis at the identity, and $N_{n+1} \leq N_n$. Let $f_n$ be the natural projection of $G/N_{n+1}$ onto $G/N_n$. Clearly, $\{G/N_n, f_n)\}$ is as required.

The implication (2)$\Rightarrow$(3) is immediate, and implications (3)$\Rightarrow$(4), (4)$\Rightarrow$(1) follow from the fact that $G$ has a neighborhood basis at the identity consisting of open, normal subgroups iff $G$ is non-archimedean, and admits a compatible invariant metric (see, e.g., \cite[Exercise 2.1.4]{Gao}).
\end{proof}

Let us mention that analogous characterizations can be proved for classes of quasi-countable groups that are sufficiently regular, e.g., quasi-torsion groups or quasi-dsc groups.

An element $g$ of a quasi-countable group $G$ is called pro-$p$, if $\clg{g}$ is a pro-$p$ group. If $G$ is abelian, and $p_0$ is a fixed prime, the $p_0$-\emph{Sylow} subgroup of $G$ is defined as the group of all pro-$p_0$ elements in $G$. This agrees with the standard terminology used in the theory of pro-finite groups (see \cite{RiZa}.)

A cyclic group of order $n$ is denoted by $\mathbbm{Z}(n)$. For a fixed prime $p$, the \emph{Pr\"{ufer} group} $\mathbbm{Z}(p^\infty)$ is the unique $p$-group, in which the number of $p$th roots of every element is exactly $p$. 

For a Polish group $G$, and a continuous action $\alpha$ of $G$ on a Polish space $X$, the symbol $E_\alpha$ denotes the orbit equivalence relation induced on $X$ by $G$ via $\alpha$, that is:
\[ xE_\alpha y \Leftrightarrow \exists g \in G \, (\alpha(g,x)=y) \]
for $x,y \in X$. For two equivalence relations $E$, $F$ defined on Polish spaces $X$, $Y$, respectively, we say that $E$ is \emph{reducible} to $F$, and write $E \leq F$ if there exists a Borel mapping $f:X \rightarrow Y$ such that
\[ xEy \Leftrightarrow f(x)Ff(y).\]
If there exists an injective $f$ as above, we write $E \sqsubseteq F$.

Let $E_0, E_0^\NN$ denote the equivalence relations on $2^\NN$, $(2^\NN)^\NN$, respectively, defined by
\[ x E_0 y \Leftrightarrow \exists m \forall n \geq n \, (x(n)=y(n)), \]
\[ x E_0^\NN y \Leftrightarrow \forall n \, (x(n) E_0 y(n)). \]

It is well known that if $E_0^\NN \leq E$, then $E$ is not reducible to an equivalence relation with countable classes (see, e.g.,  \cite[Theorem 8.5.2]{Gao}, and \cite[Exercise 8.4.3]{Gao}.)

In the sequel we will need two theorems that are important tools in the theory of Borel reducibility.

\begin{theorem}[Mackey, Hjorth, Theorem 3.5.2 in \cite{Gao}]
\label{thMac}
Let $G$ be a Polish group, and let $H$ be a closed subgroup of $G$. Let $\beta$ be a continuous action of $H$ on a Polish space $X$. There exists a Polish space $Y$, and a continuous action $\alpha$ of $G$ on $Y$ such that:
\begin{enumerate}
\item $X$ is a closed subset of $Y$,
\item $\alpha(g, x) = \beta(g, x)$ for all $g \in H$ and $x \in X$,
\item Every $\alpha$-orbit in $Y$ contains exactly one $\beta$-orbit in $X$.
\end{enumerate}

In particular, $E_\beta \sqsubseteq E_\alpha$.
\end{theorem}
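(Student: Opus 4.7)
The plan is to realize $Y$ as the \emph{co-induced} $G$-space over the $H$-space $X$. Let $H$ act continuously on $G \times X$ by
\[ h \cdot (g, x) = (gh^{-1}, \beta(h, x)). \]
This action is free (from $gh^{-1} = g$ we get $h = e$), and it is proper because the right-translation action of $H$ on $G$ is proper (as $H$ is closed in $G$, the set $H \cap K_2^{-1}K_1$ is compact for compact $K_1, K_2 \subseteq G$). Set $Y = (G \times X)/H$ with the quotient topology, and let $q \colon G \times X \to Y$ be the quotient map.

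First I would verify that $Y$ is Polish and that $q$ is open. This is the standard descent of a Polish structure along a free, proper action; one can argue either by choosing a Borel cross-section $s \colon G/H \to G$ (which exists since $G/H$ is Polish) and transferring a Polish topology onto $s[G/H] \times X$, or directly via the fact that the orbit equivalence relation of a proper action is closed with open saturations. I would then define the $G$-action $\alpha$ on $Y$ by
\[ \alpha(g', q(g, x)) = q(g'g, x), \]
which is well-defined because left multiplication on the $G$-factor commutes with the $H$-action, and continuous because it descends from the continuous, $H$-equivariant map $(g', (g, x)) \mapsto (g'g, x)$.

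Next I would embed $X$ into $Y$ via $\iota(x) = q(e, x)$. Injectivity is immediate since $(e, x_1) \sim (e, x_2)$ forces $h = e$, hence $x_1 = x_2$. For closedness of $\iota[X]$, note that $q^{-1}[\iota[X]] = \{(h^{-1}, \beta(h, x)) : h \in H,\; x \in X\}$ is the image of the closed embedding $H \times X \to G \times X$, $(h, x) \mapsto (h^{-1}, \beta(h, x))$ (the first coordinate recovers $h$), hence closed; this gives~(1). For~(2), the identity $q(g, x) = q(gh^{-1}, \beta(h, x))$ taken with $g = h \in H$ yields $q(h, x) = q(e, \beta(h, x))$, so $\alpha(h, \iota(x)) = \iota(\beta(h, x))$. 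For~(3), every orbit meets $\iota[X]$ since $\alpha(g^{-1}, q(g, x)) = \iota(x)$, and $\iota(x_1), \iota(x_2)$ lie in the same $\alpha$-orbit precisely when some $g \in G$ satisfies $q(g, x_1) = q(e, x_2)$; unwinding the definition of $\sim$ forces $g \in H$ and $x_2 = \beta(g, x_1)$. The injectivity of $\iota$ combined with (1)--(3) then witnesses $E_\beta \sqsubseteq E_\alpha$.

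The main obstacle is the Polishness of $Y$ together with the continuity of $\alpha$; the rest is essentially bookkeeping. Both rest on the freeness and properness of the $H$-action on $G \times X$, which are standard but require some care to execute — either by an explicit Borel cross-section of $G \to G/H$ combined with a transfer-of-topology argument, or by invoking general descent results for quotients of Polish spaces by free, proper actions of closed subgroups.
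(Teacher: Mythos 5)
The paper does not prove this statement; it is quoted verbatim as Theorem 3.5.2 of Gao's book (the Mackey--Hjorth induced-action theorem), so there is no in-paper argument to compare against. Your construction $Y=(G\times X)/H$ with $h\cdot(g,x)=(gh^{-1},\beta(h,x))$ and $\alpha(g',[(g,x)])=[(g'g,x)]$ is exactly the standard induced-space proof, and all of your bookkeeping (well-definedness, freeness, $q^{-1}[\iota[X]]=H\times X$ closed, properties (1)--(3), and the reduction via the injective continuous map $\iota$) checks out. Two remarks on the part you defer. First, properness is not really the mechanism that makes the quotient tractable here: the relevant point is that the witness is a \emph{continuous function of the pair} --- if $(g',x')=h\cdot(g,x)$ then $h=(g')^{-1}g$ --- which immediately shows the orbit equivalence relation is closed in $(G\times X)^2$; combined with the openness of $q$ (true for any group-action quotient) this is what yields a Hausdorff, second countable, and ultimately Polish quotient, and is how Gao's proof proceeds. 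Second, your first suggested fallback --- a Borel cross-section $s\colon G/H\to G$ --- would only identify $Y$ with $s[G/H]\times X$ as a \emph{Borel} space; transferring a topology along a Borel section will not give you continuity of $\alpha$ or closedness of $X$ in $Y$, so that route does not deliver the topological conclusions of the theorem as stated. With the closed-relation/open-map argument supplied in place of that, your proof is complete and is the standard one.
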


\begin{theorem}[Solecki \cite{Sol}]
\label{thSol}
Let $G$ be a non-compact Polish group. There exists a Polish space $X$, and a continuous action $\alpha$ of $G$ on $X$ such that
\[ E_0 \sqsubseteq E_\alpha. \]
\end{theorem}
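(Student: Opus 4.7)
The plan is to exploit non-compactness of $G$ to produce a large $\epsilon$-separated sequence in $G$, and to use it to design a continuous $G$-action on a product Polish space in which an $E_0$-like pattern is encoded coordinate by coordinate. First I would fix a compatible left-invariant metric $d$ on $G$ via the Birkhoff-Kakutani theorem. Non-compactness of $(G,d)$ means it is not totally bounded, so there exist $\epsilon > 0$ and a sequence $(g_n) \subseteq G$ with $d(g_m, g_n) \geq \epsilon$ for all $m \neq n$. Passing to the metric completion $\hat G$, one obtains a Polish space on which $G$ acts continuously and isometrically by left translation, and in which $(g_n)$ remains $\epsilon$-discrete.

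Next I would take $X$ to be a suitable $G$-invariant Polish subspace of $\hat G^\NN$ equipped with the diagonal left-translation action $\alpha$, and define the reduction $f : 2^\NN \to X$ coordinate-wise by toggling the $n$-th coordinate of $f(x)$ between a fixed reference point $p \in \hat G$ and a distinguishable translate (for instance $g_n p$), according to the bit $x(n)$. If $xE_0 y$, then $x$ and $y$ disagree at only finitely many coordinates, and a single element of $G$ should be able to correct them simultaneously, witnessing $f(x) E_\alpha f(y)$. Conversely, if $g \cdot f(x) = f(y)$, then since $g$ acts as a single isometry on every coordinate, the $\epsilon$-separation forces the set of coordinates on which $x$ and $y$ differ to be finite, so $x E_0 y$.

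The main obstacle is making the two directions of the reduction fully compatible: the backward direction must rule out any spurious $g \in G$ that could align $f(x)$ with $f(y)$ despite infinitely many disagreements, while the forward direction requires that finitely many disagreements always be correctable by one group element. A clean way to handle this is to twist the construction at each coordinate, e.g.\ by letting $G$ act on the $n$-th coordinate through a shifted or conjugated action determined by $g_n$, so that the $\epsilon$-discreteness propagates independently to each factor and the coordinates decouple under $G$ up to finite support. Continuity and injectivity of $f$ are immediate from its coordinate-wise definition, so once the twisting is properly chosen the remaining work consists in verifying that the $G$-orbit equivalence on the image of $f$ matches $E_0$ exactly.
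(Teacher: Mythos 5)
First, note that the paper does not prove this statement at all: it is quoted verbatim as a theorem of Solecki and used as a black box, so there is no internal proof to compare yours against. Judged on its own terms, your proposal starts correctly --- fixing a compatible left-invariant metric $d$, observing that non-compactness of the Polish group $G$ forces $d$ to fail total boundedness (this itself needs the standard Weil-completion/Pettis argument, since a Polish group with a totally bounded left-invariant metric is compact), extracting an $\epsilon$-separated sequence $(g_n)$, and letting $G$ act by isometries on the left completion $\hat G$. This is indeed the standard point of departure.

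The construction of the reduction, however, has a genuine gap, and it sits exactly where you locate ``the main obstacle.'' With the diagonal left-translation action on $\hat G^{\NN}$ and $f(x)_n \in \{p, g_n p\}$ according to $x(n)$, a single group element $g$ acts identically on every coordinate. So if $x$ and $y$ agree on some coordinate $n$ (say both equal $0$) and differ on some coordinate $m$, then $g\cdot f(x)=f(y)$ forces simultaneously $gp=p$ and $gp=g_m p$, which is impossible once $g_m p\neq p$. Thus it is the \emph{forward} direction --- the one you describe as something a single element ``should be able to correct'' --- that already fails, not just the backward one. The proposed repair by ``shifting or conjugating'' the action coordinate-by-coordinate does not resolve this: however you twist, one element of $G$ still determines the motion of all coordinates at once, and $E_0$ requires the group to fix infinitely many coordinates while nontrivially altering finitely many others. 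Decoupling coordinates in this way is precisely the content of the theorem, and Solecki's actual argument achieves it by passing to a more elaborate target (spaces of closed subsets of the left completion with the Wijsman topology, together with a case analysis on $G$), not by a coordinatewise toggle on $\hat G^{\NN}$. As written, the proposal defers the entire difficulty to an unspecified ``twisting,'' so it does not constitute a proof.
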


\section{Main results}

\begin{lemma}
\label{leElProCyc}
Let $G$ be a non-archimedean group, and let $g \in G$. If the group $\clg{g}$ is not discrete, then it is pro-cyclic. In particular, there exist distinct primes $p_k$, $k \in N$, $N \in \NN \cup \{\NN\}$, and pairwise commuting pro-$p_k$ elements $g_k \in \clg{g}$ such that
\[ g=\lim_k g_0 \ldots g_k.\]  

\end{lemma}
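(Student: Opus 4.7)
The plan is to establish that $\clg{g}$ is pro-cyclic and, under the non-discreteness hypothesis, in fact compact; the elements $g_k$ will then arise from the Sylow decomposition of an abelian pro-finite pro-cyclic group.

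Fix a decreasing neighborhood basis $\{U_n\}$ at the identity in $G$ consisting of open normal subgroups. Since $\gen{g} U_n$ is an open, hence clopen, subgroup of $G$ containing $\gen{g}$, it also contains $\clg{g}$, and therefore $\clg{g}/(\clg{g} \cap U_n)$ embeds into $\gen{g} U_n / U_n \cong \gen{g}/(\gen{g} \cap U_n)$, which is cyclic. Thus $\clg{g}$ is an inverse limit of cyclic groups, and in particular abelian. I would then argue that non-discreteness forces each such quotient to be finite. Otherwise some $\clg{g}/(\clg{g} \cap U_n) \cong \ZZ$; for any $m \geq n$ the cyclic group $\clg{g}/(\clg{g} \cap U_m)$ surjects onto $\ZZ$, so must itself be $\ZZ$, and since the map sends the image of $g$ (a generator) to a generator, it is an isomorphism. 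This forces $\clg{g} \cap U_m = \clg{g} \cap U_n$ for all $m \geq n$, contradicting $\bigcap_m(\clg{g} \cap U_m) = \{e\}$. Hence $\clg{g}$ is a compact pro-cyclic group.

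Being abelian and pro-finite, $\clg{g}$ decomposes as $\prod_{k \in N} H_{p_k}$ for a (possibly finite) family of distinct primes $p_k$, where $H_{p_k}$ is the pro-$p_k$ Sylow subgroup. Writing $g = (g_k)_k$ in this decomposition and identifying each $g_k$ with its image in $\clg{g}$ via the canonical inclusion of $H_{p_k}$, I conclude that $g_k$ is pro-$p_k$ (since $\clg{g_k} \leq H_{p_k}$), the $g_k$ pairwise commute (as $\clg{g}$ is abelian), and the product topology yields $g_0 \ldots g_k \to g$. The main obstacle is the compactness step: combining the non-discreteness hypothesis with the rigidity of the cyclic quotients to rule out $\ZZ$-valued quotients.
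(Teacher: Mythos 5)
Your argument is, in substance, the same as the paper's: both proofs reduce to showing that the discrete cyclic images of $\clg{g}$ (your quotients $\clg{g}/(\clg{g} \cap U_n)$, the paper's coordinate projections $\gen{g(n)}$ in a quasi-countable presentation $\clg{g} \leq \prod_n L_n$) must be finite because $\gen{g}$ is not discrete, conclude that $\clg{g}$ is a pro-finite, pro-cyclic group, and finish with the standard decomposition of such a group into the product of its Sylow subgroups. Your explicit rigidity argument ruling out $\ZZ$-valued quotients is just a more detailed version of the paper's remark that each $\gen{g(n)}$ must be finite.

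The one flaw is your opening sentence: a non-archimedean group $G$ need not have a neighborhood basis at the identity consisting of open \emph{normal} subgroups --- that property characterizes the quasi-countable groups (Lemma \ref{leCharPro}), and fails for instance for $G=S_\infty$, which has no proper open normal subgroups. Without normality, $\gen{g}U_n$ need not be a subgroup and $\gen{g}U_n/U_n$ need not be a group, so the embedding you invoke is not available as stated. The repair is immediate: work inside $L=\clg{g}$, which is abelian, and set $V_n=U_n\cap L$ for any basis $\{U_n\}$ of open subgroups of $G$. Each $V_n$ is automatically normal in $L$, the quotient $L/V_n$ is countable, discrete, and generated by the image of $g$ (since $\gen{g}V_n$ is a clopen subgroup of $L$ containing $\gen{g}$, hence equal to $L$), and the rest of your argument, including the finiteness step and the Sylow decomposition, goes through verbatim. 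With that correction the proof is complete.
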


\begin{proof}
Put $L=\clg{g}$. Since $L$ is abelian, there exists an invariant metric on $L$. By Lemma \ref{leCharPro}, $L$ is quasi-countable. Without loss of generality we can assume that $L \leq \prod_n L_n$, where each $L_n$ is discrete, and countable. 

Since $\gen{g}$ is not discrete, it is easy to see that $\left\langle g(n)\right\rangle \leq L_n$ must be finite for every $n$. Therefore we can assume that each $L_n$ is finite, that is, $\prod_n L_n$ and $L$ are pro-finite. Since $g$ topologically generates $L$, the group $L$ is pro-cyclic. Now it is well known (see, e.g., \cite[Theorem 2.7.2] {RiZa}) that pro-cyclic groups are direct products of their Sylow subgroups.  
\end{proof}

Let $G_n$, $n \in \NN$, be groups with metrics $d_n$. Define
\[ c_0(G_n,d_n)=\{ g \in \prod_n G_n: d_n(g(n),e) \rightarrow 0 \} \]
with the supremum metric
\[ d_s(g,h)=\sup\{ d_n(g(n),h(n)): n \in \NN \}. \]

\begin{lemma}
Let $G_n$, $n \in \NN$, be Polish groups with invariant, compatible metrics $d_n$. The group $c_0(G_n,d_n)$ with metric $d_s$ is Polish.
\end{lemma}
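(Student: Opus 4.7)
The plan is to verify separately that $c_0(G_n, d_n)$ is a subgroup of $\prod_n G_n$, that $d_s$ is a well-defined compatible invariant metric on it, and that $(c_0(G_n, d_n), d_s)$ is separable and complete.

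First I would handle the algebraic and metric properties together. For $g, h \in c_0(G_n, d_n)$, two-sided invariance of $d_n$ gives
\[ d_n(g(n)h(n)^{-1}, e) = d_n(g(n), h(n)) \leq d_n(g(n), e) + d_n(h(n), e), \]
which tends to $0$, so $gh^{-1} \in c_0(G_n, d_n)$. The same inequality bounds the supremum defining $d_s(g, h)$, so $d_s$ is finite-valued. The metric axioms, invariance of $d_s$, and continuity of the group operations in the induced topology then follow routinely from the corresponding properties of the $d_n$.

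For separability I would pick countable $d_n$-dense sets $D_n \subseteq G_n$ and form the countable collection $D$ of sequences $(h_0, \ldots, h_k, e, e, \ldots)$ with $h_i \in D_i$ and $k \in \NN$. Given $g \in c_0(G_n, d_n)$ and $\epsilon > 0$, choose $k$ so that $d_n(g(n), e) < \epsilon$ for $n > k$ and approximate each of $g(0), \ldots, g(k)$ to within $\epsilon$ using the $D_i$; the resulting element of $D$ lies within $\epsilon$ of $g$ in $d_s$.

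Completeness of $d_s$ is the main step, and its crucial input is that each $d_n$ is itself complete. This uses the fact that a Polish group carrying a compatible two-sided invariant metric is SIN, hence admits a complete left-invariant metric; since any two compatible two-sided invariant metrics on a topological group induce the same (two-sided) uniformity, the existence of one complete invariant metric forces every compatible two-sided invariant metric to be complete as well. Granting this, if $(g^{(k)})_k$ is $d_s$-Cauchy then for each $n$ the coordinate sequence $(g^{(k)}(n))_k$ is $d_n$-Cauchy and converges to some $g(n) \in G_n$. A standard $\epsilon/2$-argument, bounding $d_n(g(n), e) \leq d_n(g(n), g^{(K)}(n)) + d_n(g^{(K)}(n), e)$ for a sufficiently large fixed $K$ and using the $c_0$-condition on $g^{(K)}$, shows $d_n(g(n), e) \to 0$, so $g \in c_0(G_n, d_n)$; a parallel argument shows $d_s(g^{(k)}, g) \to 0$. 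The main obstacle is precisely the completeness claim for the $d_n$, which rests on the non-trivial fact that every Polish SIN group is complete left-invariantly metrizable.
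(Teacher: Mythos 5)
Your proposal is correct and follows essentially the same route as the paper: the same eventually-trivial countable dense set for separability, the same coordinatewise-limit argument for completeness resting on the completeness of each $d_n$ (which the paper simply cites as \cite[Exercise 2.2.4]{Gao} and you justify via the SIN/uniformity argument), and the same use of invariance of $d_s$ for continuity of the group operations. No substantive differences.
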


\begin{proof}
Fix countable, dense sets $D_n \subseteq G_n$ for each $n \in \NN$. It is easy to see that the set
\[ D=\{ g \in c_0(G_n,d_n): \ \forall n \, g(n) \in D_n \mbox{ and } \forall^\infty n \, g(n)=e \}. \]
is countable and dense in $c_0(G_n,d_n)$. 

Fix now a Cauchy sequence $\{g_m \}$ in $c_0(G_n,d_n)$. Clearly, $\{g_m(n)\}$ is Cauchy in $d_n$ for every fixed $n$. Because the metrics $d_n$ are complete (see \cite[Exercise 2.2.4]{Gao}),  for every fixed $n$ there exists $g(n) \in G_n$ such that $g_m(n)$ converge to $g(n)$. Because the sequence $\{g_m\}$ is Cauchy in $d_s$,  the element $g=(g(0),g(1), \ldots )$ is in $c_0(G_n,d_n)$, and $\{g_m\}$ converges to $g$ in $d_s$. Therefore $d_s$ induces a Polish topology on $c_0(G_n,d_n)$.

In order to see that group operations are continuous in $d_s$, note that $d_s$ is also invariant. Therefore (see, e.g., \cite[Exercise 2.1.6]{Gao})
\[ d_s(gh,g'h') \leq d_s(g,h)+d(g',h') \]
for every $g,g',h,h' \in c_0(G_n,d_n)$. This easily implies that multiplication is continuous with respect to $d_s$. Continuity of inversion directly follows from the invariance of $d_s$.
\end{proof}

\begin{lemma}
\label{le7}
Let $G$ be an abelian quasi-torsion group with an invariant metric $d$. Let $\{H_m\}$ be the family of all Sylow subgroups of $G$. Then $G=c_0(H_m,d_m)$, where $d_m$ is the restriction of $d$ to $H_m$.
\end{lemma}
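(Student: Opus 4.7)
The plan is to exhibit the natural primary decomposition as a topological group isomorphism $\phi\colon G \to c_0(H_{p_m}, d_{p_m})$, where $(p_m)_m$ enumerates the primes and $H_{p_m}$ is the $p_m$-Sylow subgroup of $G$. Write $G \leq \prod_n G_n$ with each $G_n$ a discrete, countable, abelian torsion group, and decompose $G_n = \bigoplus_p G_n^{(p)}$. For $g \in G$, define $g_{(p)} \in \prod_n G_n$ coordinate-wise by $g_{(p)}(n) = g(n)^{(p)}$, and set $\phi(g) := (g_{(p_m)})_m$.

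The first and main obstacle is that $g_{(p)}$ actually lies in $G$, not merely in $\prod_n G_n$. I would apply Lemma~\ref{leElProCyc}. If $\clg{g}$ is discrete, then $g$ has finite order: each $g(n)$ has finite order by quasi-torsion, so $g^{L_n} \in G_{\gen{n}}$ for $L_n = \mathrm{lcm}(|g(0)|, \ldots, |g(n)|)$, and discreteness forces $g^{L_N} = e$ for some $N$. Thus $g_{(p)}$ is the usual primary component in the finite cyclic group $\gen{g} \leq G$. If $\clg{g}$ is non-discrete, Lemma~\ref{leElProCyc} produces commuting pro-$q_k$ elements $g_k \in \clg{g}$ with $g = \lim_k g_0 \cdots g_k$; projecting coordinate-wise into each $G_n$ and using uniqueness of primary decomposition in the finite abelian subgroup generated by the $g_k(n)$'s identifies $g_k$ with $g_{(q_k)}$, so $g_{(p)} \in \clg{g} \leq G$. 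Since each coordinate of $g_{(p)}$ has $p$-power order, $g_{(p)} \in H_p$.

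To check $\phi(g) \in c_0$, the sequence $g_{(p_0)} \cdots g_{(p_k)}$ is Cauchy in $d$ (converging to $g$), so by invariance
\[
d(g_{(p_{k+1})}, e) = d(g_{(p_0)} \cdots g_{(p_k)},\, g_{(p_0)} \cdots g_{(p_{k+1})}) \to 0.
\]
For surjectivity, given $(h_m) \in c_0(H_{p_m}, d_{p_m})$, I would define $g \in \prod_n G_n$ by $g(n) = \sum_m h_m(n)$; at each fixed $n$ the sum is finite because $d(h_m, e) \to 0$ and $\{G_{\gen{n}}\}$ is a neighborhood basis at $e$, forcing $h_m \in G_{\gen{n}}$, hence $h_m(n) = e$, for all but finitely many $m$. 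Then $g$ is the product-topology limit of its partial sums, which all lie in $G$, so $g \in G$ by closure, and $g(n)^{(p_m)} = h_m(n)$ yields $\phi(g) = (h_m)$.

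For the homeomorphism, I would use the combinatorial fact that $g \in G_{\gen{N}}$ if and only if $g_{(p_m)} \in G_{\gen{N}}$ for every $m$, since the condition $g(n) = e$ for $n \leq N$ is preserved and detected coordinate-wise by primary decomposition. Consequently, the neighborhood basis $\{G_{\gen{N}}\}$ at $e$ in $G$ corresponds, via $\phi$, to the basis $\{(h_m) : h_m \in G_{\gen{N}} \text{ for all } m\}$ at $e$ in $c_0$; comparing these sets with $d_s$-balls via the inclusions $B_d(e, \epsilon) \subseteq G_{\gen{N}} \subseteq B_d(e, \epsilon')$ (valid for appropriate $N$, $\epsilon$, $\epsilon'$ by the basis property) shows the $d_s$-topology and the $G$-topology agree on $c_0$.
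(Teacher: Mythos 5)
Your proof is correct, and it rests on the same primary (Sylow) decomposition via Lemma \ref{leElProCyc} as the paper's, but you build the isomorphism in the opposite direction. The paper defines $\phi\colon c_0(H_m,d_m)\to G$ by $h\mapsto\sum_m h_m$, which makes well-definedness essentially a convergence estimate, and it then gets openness for free from the open mapping theorem for Polish groups once injectivity and surjectivity are checked. You instead define $\phi\colon G\to c_0$ by coordinatewise primary decomposition, which forces you to prove the one genuinely nontrivial point your direction creates, namely that each primary component $g_{(p)}$ lies in $G$ (and indeed in $H_p$); your argument for this --- identifying $g_{(q_k)}$ with the pro-$q_k$ factors supplied by Lemma \ref{leElProCyc} via uniqueness of primary decomposition in each discrete coordinate, with a separate elementary argument when $\clg{g}$ is discrete --- is sound, and it is in effect the paper's surjectivity step relocated. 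Your surjectivity step (summing $h_m(n)$ coordinatewise and using closedness of $G$ in $\prod_n G_n$) is a clean alternative to the paper's convergence argument for $\sum_m h_m$. Finally, where the paper invokes the open mapping theorem, you match the neighborhood bases $\{G_{\gen{N}}\}$ and the $d_s$-balls by hand; this is more work but more elementary and makes the homeomorphism explicit. Minor points only: you should say a word about $\phi$ being a homomorphism (immediate from uniqueness of primary decomposition in each coordinate), and your multiplicative notation clashes with the paper's additive one, but neither affects correctness.
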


\begin{proof}
%
%

We show that the mapping $h \mapsto \sum_m h_m$ is a well defined isomorphism $\phi: c_0(H_m,d_m) \rightarrow G$. 
Fix $h_m \in H_m$ such that $h_m \rightarrow 0$. Fix $\epsilon>0$, and an open subgroup $V \leq G$ such that $\dist(V,0)< \epsilon$. Let $m_0$ be such that $h_m \in V$ for $m \geq m_0$. Then 
\[ \sum_{m=m_0}^{m_1} h_m \in V \]
for every $m_1>m_0$, which implies
\[ d(h_0+ \ldots + h_{m_1}, h_0+ \ldots + h_{m_0})=d(h_{m_0+1}+ \ldots + h_{m_1},0)< \epsilon. \]
Since $\epsilon$ was arbitrary, $\sum_m h_m$ is convergent, and $\phi$ is well defined. An analogous argument shows that $\phi$ is a continuous homomorphism.

Let $\{G_n\}$ be an adequate family for $G$ consisting of torsion groups. Observe that if $h_m \in H_m$,  $m=0, \ldots, l$, then
\[ h_0(n)+ \ldots+h_l(n) = 0 \ \Leftrightarrow h_0(n)= \ldots =h_l(n) = 0 \]
for every $n$ (here, $h_m(n)$ is the value of the $n$th coordinate of $h_m$ in $\prod_n G_n$.) This easily implies that for $h_m \in H_m$, $m \in \NN$, such that $ h_m \rightarrow 0$ we have that
\[ \sum_m h_m=0 \ \Leftrightarrow h_m=0 \mbox{ for every } m, \] 
that is, $\phi$ is injective. By Lemma \ref{leElProCyc}, for every $g \in G$ there exist $h_m \in H_m$, $m \in \NN$, such that $g=\sum_m h_m$, that is, $\phi$ is also surjective. But this implies that $\phi$ is open (see \cite[Theorem 2.3.3]{Gao}), so $\phi$ is an isomoprhism.
\end{proof}

The same argument can be used to prove the following.

\begin{lemma}
\label{le:c0}
Let $G$ be an abelian quasi-torsion group with an invariant metric $d$, and let $H_0, H_1, \ldots$ be subgroups of $G$ such that
\begin{enumerate}
\item $\diam(H_n) \rightarrow 0$,
\item each $H_n$ is a pro-$\pi_n$-group for some set of primes $\pi_n$, 
\item $\pi_n \cap \pi_{n'}= \emptyset$, if $n \neq n'$.
\end{enumerate}
Then $\overline{\bigoplus_n H_n}=\prod_n H_n \leq G$.
\end{lemma}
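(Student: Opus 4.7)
The plan is to mirror the proof of Lemma \ref{le7}, with $\prod_n H_n$ (Polish in the product topology) playing the role that $c_0(H_m, d_m)$ did there. Define $\phi: \prod_n H_n \to G$ by $\phi((h_n)) = \sum_n h_n$; the goal is to show that $\phi$ is a topological group isomorphism onto $\overline{\bigoplus_n H_n}$.

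That $\phi$ is a well-defined continuous homomorphism is essentially the same open-subgroup argument as in Lemma \ref{le7}. For any $(h_n) \in \prod_n H_n$, the hypothesis $\diam(H_n) \to 0$ combined with $0 \in H_n$ gives $h_n \to 0$ in $G$. By Lemma \ref{leCharPro}, $G$ is non-archimedean, so every neighborhood of $0$ contains an open subgroup $V$; once the diameters are smaller than a ball sitting inside $V$, we get $H_n \subseteq V$ for large $n$, and the subgroup property forces the tails of $\sum_n h_n$ to stay in $V$. This yields both convergence of the series and continuity of $\phi$, and abelianness makes it a homomorphism. Injectivity is likewise parallel to Lemma \ref{le7}: fix an adequate family $\{G_k\}$ of torsion abelian groups for $G$; if $\phi((h_n)) = 0$, then $\sum_n h_n(k) = 0$ in each discrete $G_k$ as a finite sum of torsion elements whose orders are $\pi_n$-numbers with the $\pi_n$ pairwise disjoint, and the order-coprimality argument forces every $h_n(k) = 0$.

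The genuinely new step, and the main obstacle, is identifying the image of $\phi$ with $\overline{\bigoplus_n H_n}$. The inclusion $\phi(\prod_n H_n) \subseteq \overline{\bigoplus_n H_n}$ is immediate, since the finite partial sums of $\sum_n h_n$ lie in $\bigoplus_n H_n$. For the reverse inclusion, given $g \in \overline{\bigoplus_n H_n}$, I would use the canonical primary decomposition of each torsion abelian $G_k$ and set $h_n(k) \in G_k$ to be the $\pi_n$-primary component of $g(k)$. Approximating $g$ by finite sums $g^{(j)} = \sum_m h_m^{(j)}$ with $h_m^{(j)} \in H_m$, and using that convergence in each discrete $G_k$ is eventually constant, the components $h_n^{(j)}$ stabilize coordinatewise to $h_n$; closedness of $H_n$ in $G$ then gives $h_n \in H_n$, and a coordinatewise check yields $g = \sum_n h_n$. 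The open mapping theorem for Polish groups then upgrades the continuous bijection $\phi: \prod_n H_n \to \overline{\bigoplus_n H_n}$ to a topological isomorphism. Unlike Lemma \ref{le7}, where surjectivity onto $G$ was supplied by Lemma \ref{leElProCyc}, here surjectivity onto the proper subgroup $\overline{\bigoplus_n H_n}$ must be extracted by hand via this primary-decomposition argument.
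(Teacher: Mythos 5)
Your proposal is correct and follows essentially the same route as the paper, which simply observes that $c_0(H_n,d_n)=\prod_n H_n$ and asserts that the map $h \mapsto \sum_n h(n)$ is an isomorphism onto $\overline{\bigoplus_n H_n}$ by the argument of Lemma \ref{le7}. You rightly identify that the one step not covered by that argument is surjectivity onto $\overline{\bigoplus_n H_n}$ (Lemma \ref{leElProCyc} no longer applies directly), and your coordinatewise primary-decomposition argument fills that gap correctly.
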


\begin{proof}
Observe that $c_0(H_n,d_n)=\prod_n H_n$, where $d_n$ is the restriction of $d$ to $H_n$, and prove that $\phi:\prod_n H_n \rightarrow \overline{\bigoplus_n H_n}$ defined by $h \mapsto \sum_n h(n)$ is an isomorphism.
\end{proof}

Now we prove two special cases of the main structure theorem which are of some independent interest.

\begin{lemma}
\label{leLocCom}
Let $G$ be an abelian quasi-torsion group. Suppose that $G$ is non-locally compact, and all Sylow subgroups of $G$ are locally compact. Then there exist non-compact groups $K_n$, $n \in \NN$, such that 
%
%
%
\[ \prod_n K_n \leq G.\]
\end{lemma}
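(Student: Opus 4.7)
My plan is to build closed subgroups $K_n \leq G$ to which Lemma \ref{le:c0} applies (pairwise disjoint prime sets $\pi_n$, diameters tending to zero, each $K_n$ pro-$\pi_n$) while ensuring each $K_n$ is non-compact. The starting point is the Sylow decomposition $G = c_0(H_p, d_p)$ from Lemma \ref{le7}, which applied to any clopen subgroup $G_{\gen{N}}$ gives $G_{\gen{N}} = c_0((H_p)_{\gen{N}}, d_p)$. Because $G$ is non-archimedean, non-local-compactness of $G$ is equivalent to every $G_{\gen{N}}$ being non-compact. For each prime $p$ let $N_p$ be the least $N$ with $(H_p)_{\gen{N}}$ compact; this exists by local compactness of $H_p$.

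If $\{N_p\}_p$ is unbounded (Case I), I pick distinct primes $p_n$ with $N_{p_n}\to\infty$ and set $K_n = (H_{p_n})_{\gen{N_{p_n}-1}}$. Minimality of $N_{p_n}$ makes $K_n$ non-compact, it is pro-$p_n$, and it lies in $G_{\gen{N_{p_n}-1}}$ whose diameter tends to zero; Lemma \ref{le:c0} with $\pi_n = \{p_n\}$ closes the case.

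If $N_p \leq N_0$ for all $p$ (Case II), then for $N \geq N_0$ every factor $(H_p)_{\gen{N}}$ is compact, so a routine total-boundedness analysis of the $c_0$-product forces the factor diameters $D_p(N) := \diam((H_p)_{\gen{N}})$ not to tend to zero as $p\to\infty$. Pick $\delta_N > 0$ so that $S_N := \{p : D_p(N) \geq \delta_N\}$ is infinite; note $\delta_N \leq \diam(G_{\gen{N}}) \to 0$. Dovetailing pairs $(n,k)$ in diagonal order, I construct pairwise disjoint infinite sets $\pi_n$ and strictly increasing $N_n$ with $\pi_n \subseteq S_{N_n}$ and $\diam(G_{\gen{N_n}}) \to 0$: at any finite stage only finitely many primes have been used, so each infinite $S_{N_n}$ supplies a fresh one. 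Set $K_n = c_0((H_p)_{\gen{N_n}} : p \in \pi_n)$, embedded in $G$ by zero-padding outside $\pi_n$. Then $K_n$ is pro-$\pi_n$, $\diam(K_n) \leq \diam(G_{\gen{N_n}}) \to 0$, and $K_n$ is non-compact because $D_p(N_n) \geq \delta_{N_n}$ for every $p \in \pi_n$ prevents total boundedness of its $c_0$-structure. Lemma \ref{le:c0} concludes.

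The main obstacle is the Case II construction: arranging the $\pi_n$ to be pairwise disjoint, each infinite, and each inside the appropriate $S_{N_n}$, while also keeping $N_n \to \infty$ to force diameters down. The diagonal dovetailing is what handles this; a naive stage-by-stage construction can fail if a later $S_N$ happens to be almost contained in the union of earlier $\pi_m$, whereas the dovetailing ensures only a finite set of primes is ever excluded at each sub-step.
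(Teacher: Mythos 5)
Your proof is correct, and its skeleton is the paper's: decompose via the Sylow $c_0$-product of Lemma \ref{le7}, split into the case where individual Sylow subgroups stay non-compact arbitrarily deep in the filtration versus the case where they are all eventually compact, and finish with Lemma \ref{le:c0}. Your Case I is exactly the paper's Case 1 (your $(H_{p_n})_{\gen{N_{p_n}-1}}$ are its $H_{k_n}\cap G_{\gen{l_n}}$). Where you genuinely diverge is Case II: the paper runs a combinatorial dichotomy on infinite sets of Sylow indices (either some infinite $A$ has every infinite $B\subseteq A$ producing non-compact $\overline{\bigoplus_{m\in B}H_m}\cap G_{\gen{n}}$ deep down, in which case one partitions $A$; or else one recursively extracts sets $A_k$ of indices $m$ with $\pi_{l_{k-1}+1}[H_m\cap G_{\gen{l_{k-1}}}]$ non-trivial), whereas you replace this with the quantitative observation that non-compactness of $G_{\gen{N}}=c_0((H_p)_{\gen{N}},d_p)$ with all factors compact forces the factor diameters not to vanish, yielding the infinite threshold sets $S_N$, which you then distribute by dovetailing. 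Your route avoids the paper's sub-case split and is arguably cleaner; the paper's recursive construction is more explicitly coordinate-based. One point to make explicit: your non-compactness claim for $K_n$ in Case II uses separation in the supremum metric of the $c_0$-presentation, which is compatible with the subspace topology only via the open-mapping identification as in Lemma \ref{le7} (the restriction of $d$ itself need not separate the witnesses $h_p$, $p\in\pi_n$); alternatively, note that $D_p(N_n)\ge\delta_{N_n}$ forces, by pigeonhole over the finitely many coordinates $j$ with $G_{\gen{j}}\not\subseteq B(0,\delta_{N_n}/2)$, a single coordinate $j$ on which infinitely many $(H_p)_{\gen{N_n}}$, $p\in\pi_n$, project non-trivially, so $\pi_j[K_n]$ is infinite and $K_n$ is non-compact. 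Either patch is routine, so I consider the argument complete.
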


\begin{proof}
Let $\{G_n\}$ be an adequate family for $G$ consisting of torsion groups. Because $G$ is non-locally compact, we can assume that $G_n$ are such that $\pi_n[G_{\gen{m}}]$ is infinite for every $m$ and $n>m$. Let $\{H_m\}$ be the family of all Sylow subgroups of $G$. 
We consider the following two cases.

\emph{Case 1.} For every $n$ there exists  $n' \geq n$ and $m$ such that $H_m \cap G_{\gen{n'}}$ is non-compact. Observe that since each $H_m$ is locally compact, for every $m$ there exist only finitely many $n$ such that $H_m \cap G_{\gen{n}}$ is non-compact. Therefore we can find an infinite sequence $(k_n,l_n)$, $n \in \NN$, such that $k_n \neq k_{n'}$ if $n \neq n'$, $( l_n)$ is strictly increasing, and
\[ K_n=H_{k_n} \cap G_{\gen{l_n}}\]
is non-compact. Lemma \ref{le:c0} implies that $\prod_n K_n \leq G$. 
%
%

\emph{Case 2.} There exists $n$ such that $G_{\gen{n}} \cap H_m$ is compact for every $m$. After replacing $G$ with $G_{\gen{n}}$, which is also non-locally compact, we can assume that every $H_m$ is compact.

Suppose that there exists an infinite set $A \subseteq \NN$ such that for every infinite $B \subseteq A$ there exist infinitely many $n$ such that 
\[ \overline{\bigoplus_{m \in B} H_m} \cap G_{\gen{n}}\]
 is non-compact. Then we can partition $A$ into infinite sets $B_n$, and find a strictly increasing sequence $(l_n)$ such that for
\[ K_n= \overline{\bigoplus_{m \in B_n} H_m} \cap G_{\gen{l_n}},\]
each $K_n$ is non-compact, and, as in Case 1, $\prod_n K_n \leq G$.

Otherwise, we put 
\[ A_0=\{ m \in \NN : \pi_0[H_m] \mbox{ is not trivial} \}. \]

By our assumption on the form of the adequate family $\{ G_n \}$, and by compactness of all $H_m$, the set $A_0$ is infinite. Obviously for every infinite $B \subseteq A_0$ we have then that 
\[ K_0=\overline{\bigoplus_{m \in B} H_m} \]
is non-compact. On the other hand, since there is no set $A \subseteq \NN$ as above, there must exist an infinite $B_0 \subseteq A_0$ and $l_0$ such that 
\[ \overline{\bigoplus_{m \in B_0} H_m} \cap G_{\gen{l_0}}\]
is compact. Then, again, the set $A_1$ defined by 
\[ A_1=\{ m \in \NN \setminus B_0 : \pi_{l_0+1}[H_m \cap G_{\gen{l_0}}] \mbox{ is not trivial} \}, \]
is infinite, and the group
\[ \overline{\bigoplus_{m \in B} H_m} \cap G_{\gen{l_0}}\]
is non-compact for every infinite $B \subseteq A_1$. We can find $B_1 \subseteq A_1$ and $l_1 \in \NN$ in the same way we have found $B_0$ and $l_0$.

In this manner, we construct pairwise disjoint infinite sets $B_n \subseteq \NN$, $n \in \NN$, and a strictly increasing sequence $(l_n)$ so that 
\[ K_n=\overline{\bigoplus_{m \in B_n} H_m} \cap G_{\gen{l_n}}\]
is non-compact for every $n$. An application of Lemma \ref{le:c0} completes the proof.
\end{proof}

Since each $K_n$ in Lemma \ref{leLocCom} is a non-compact, quasi-countable group, we can find closed $L_n \leq K_n$ such that $K_n /L_n$ is infinite and discrete. Therefore we get the following corollary.

\begin{corollary}
Let $G$ be an abelian quasi-torsion group. Suppose that $G$ is non-locally compact, and all Sylow subgroups of $G$ are locally compact. Then there exists a closed $L \leq G$, and infinite, discrete groups $K_n$, $n \in \NN$, such that 
%
%
%
\[ \prod_n K_n \leq G/L.\]
\end{corollary}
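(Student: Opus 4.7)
The plan is to apply Lemma~\ref{leLocCom} to extract non-compact factors $K_n$ inside $G$, then replace each $K_n$ by an infinite discrete quotient, and finally assemble the kernels coordinatewise into the required $L$.

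First, invoke Lemma~\ref{leLocCom} to obtain non-compact groups $K_n$, $n \in \NN$, with $\prod_n K_n \leq G$. Each $K_n$ is closed in $G$ and inherits a compatible two-sided invariant metric, so by Lemma~\ref{leCharPro} it is quasi-countable and therefore admits a neighborhood basis at the identity consisting of open normal subgroups. If every such open normal subgroup had finite index in $K_n$, then $K_n$ would be (isomorphic to) the inverse limit of its finite quotients, hence profinite and compact, contradicting non-compactness. Fix therefore an open normal subgroup $L_n \leq K_n$ of infinite index; since $L_n$ is open, $K_n/L_n$ is discrete, and by construction it is infinite.

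Set $L = \prod_n L_n$, a closed subgroup of $\prod_n K_n \leq G$ (normal, since $G$ is abelian). The coordinatewise quotient maps assemble to a continuous surjective homomorphism $\prod_n K_n \to \prod_n (K_n/L_n)$ whose kernel is exactly $L$, inducing a continuous bijective homomorphism $(\prod_n K_n)/L \to \prod_n (K_n/L_n)$. Both groups are Polish, so by the open mapping theorem this is a topological isomorphism. Moreover, $\prod_n K_n$ is a closed subgroup of $G$ containing $L$, so $(\prod_n K_n)/L$ sits as a closed subgroup of $G/L$. Composing, $\prod_n (K_n/L_n) \leq G/L$, as required.

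All real content has been absorbed into Lemma~\ref{leLocCom}; the rest is routine product-and-quotient bookkeeping, the only point worth verifying being that the product topology on $\prod_n(K_n/L_n)$ agrees with the quotient topology on $(\prod_n K_n)/L$, which follows directly from the open mapping theorem. I do not anticipate any real obstacle.
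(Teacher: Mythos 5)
Your proposal is correct and follows essentially the same route as the paper: the paper likewise obtains the $K_n$ from Lemma~\ref{leLocCom}, notes that each non-compact quasi-countable $K_n$ admits a closed (open) $L_n$ with $K_n/L_n$ infinite and discrete, and passes to $G/\prod_n L_n$. Your justification that such an $L_n$ exists (otherwise $K_n$ would be profinite, hence compact) and the product-versus-quotient bookkeeping are exactly the details the paper leaves implicit.
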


\begin{lemma}
\label{leTor}
Let $G$ be an abelian and torsion quasi-countable group. If $G$ is non-locally compact, then there exists an infinite, discrete subgroup $L \leq G$.
\end{lemma}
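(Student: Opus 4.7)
The plan is to locate a prime $p$ for which the $p$-torsion subgroup $G[p]=\{g\in G:pg=0\}$ is non-compact, and then to exploit the fact that $G[p]$ is a Polish $\mathbb{F}_p$-vector space in which any open subspace splits off via a linear complement.

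The substantive step is to show that $G[p]$ is non-compact for some prime $p$. Suppose for contradiction that $G[p]$ is compact for every prime $p$. The continuous homomorphism $g\mapsto p^{k-1}g$ from $G[p^k]$ to $G[p]$ has kernel $G[p^{k-1}]$, so $G[p^k]/G[p^{k-1}]$ injects continuously into the compact group $G[p]$; by the open mapping theorem for Polish groups this quotient is therefore compact (being homeomorphic to a closed subgroup of $G[p]$), and hence by induction on $k$ each $G[p^k]$ is compact, as an extension of compact by compact. Since $G$ is torsion, $G=\bigcup_n G[n]$, and for $n=p_1^{k_1}\cdots p_r^{k_r}$ the subgroup $G[n]=G[p_1^{k_1}]+\cdots+G[p_r^{k_r}]$ is a continuous image of the compact product $\prod_iG[p_i^{k_i}]$ under the addition map, hence compact. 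Thus $G$ is $\sigma$-compact Polish, and the Baire category theorem forces $G$ to be locally compact, contradicting the hypothesis.

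Fix a prime $p$ with $G[p]$ non-compact. Then $G[p]$ is a closed, non-compact, quasi-countable Polish $\mathbb{F}_p$-vector space. Since $G[p]$ is non-archimedean and non-compact, not every open subgroup of $G[p]$ can have finite index (else $G[p]=\varprojlim G[p]/V$ would be profinite, hence compact). Pick an open subgroup $V\leq G[p]$ with $G[p]/V$ infinite. Both $V$ and $G[p]$ being $\mathbb{F}_p$-vector spaces, the short exact sequence $V\hookrightarrow G[p]\twoheadrightarrow G[p]/V$ splits as $\mathbb{F}_p$-vector spaces; let $L$ be the image of an $\mathbb{F}_p$-linear section $G[p]/V\to G[p]$. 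Then $L$ is a subgroup of $G[p]$ with $L\cap V=\{0\}$ and $L$ in bijection with the infinite discrete quotient $G[p]/V$. Since $V$ is open, $\{0\}$ is open in $L$, making $L$ discrete, and since discrete subgroups of Hausdorff topological groups are automatically closed, $L$ is an infinite closed discrete subgroup of $G$. The main point requiring care is the claim in the first step that the continuous injection of $G[p^k]/G[p^{k-1}]$ into $G[p]$ forces the source to be compact; this uses that the image of the homomorphism $g\mapsto p^{k-1}g$ is a Polish (indeed closed) subgroup of $G[p]$, so that the open mapping theorem delivers the required topological embedding onto a compact target.
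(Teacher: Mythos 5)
The second half of your argument (splitting an infinite discrete linear complement off an open subgroup of infinite index in the $\mathbb{F}_p$-vector space $G[p]$) is fine, but the first, ``substantive'' step is not merely under-justified --- it is false. A non-locally compact, torsion, abelian, quasi-countable group need not have $G[p]$ non-compact for any prime $p$. Let $C=\prod_m p\ZZ(p^2)\cong\prod_m\ZZ(p)$ and let $H=\bigoplus_m\ZZ(p^2)+C\leq\prod_m\ZZ(p^2)$, retopologized so that $C$ (with its compact product topology) is an open subgroup. Then $H$ is a locally compact, non-compact, abelian, quasi-countable group of exponent $p^2$, and $H[p]=C$ is compact: if $ph=0$ and $h=f+c$ with $f$ of finite support and $c\in C$, then $pf=0$, so $f\in C$ and hence $h\in C$. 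Now put $G=H^{\NN}$ with the product topology. This $G$ is abelian, torsion, quasi-countable and non-locally compact, yet $G[p]=C^{\NN}$ is compact and $G[q]=\{0\}$ for every prime $q\neq p$.

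The precise point where your induction breaks is the parenthetical claim that the image of $g\mapsto p^{k-1}g$ is a closed subgroup of $G[p]$, so that the open mapping theorem applies. Continuous homomorphic images of Polish groups need not be closed; in the example above $pG=\bigl(\bigoplus_m p\ZZ(p^2)\bigr)^{\NN}$ is a proper dense subgroup of $G[p]=C^{\NN}$, and $G[p^2]/G[p]\cong\bigl(\bigoplus_m\ZZ(p)\bigr)^{\NN}$ is a non-locally compact Polish group injecting continuously into a compact group without being compact --- the same phenomenon as the irrational injection of $\ZZ$ into the circle. The paper sidesteps this entirely by a Baire category argument: instead of the $p$-socle it considers the sets $A_{n,m}$ of elements $g$ of order at most $m$ all of whose non-zero multiples $kg$, $k\leq m$, avoid the basic open subgroup $G_{\gen{n}}$; one such set is non-meager, non-local compactness makes it infinite modulo some $G_{\gen{n_1}}$, and Pr\"ufer's theorem on bounded abelian groups then lets one build the discrete subgroup as an increasing union of finite subgroups meeting $G_{\gen{n_1}}$ trivially. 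To salvage your approach you would need to replace ``$G[p]$ is non-compact for some $p$'' by a non-meagerness statement of that kind.
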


\begin{proof}
%
%
Let $\{G_n\}$ be an adequate family for $G$, and for $n,m \in \NN \setminus \{0\}$, let
\[ A_{n,m} =\{ g \in G: \, o(g) \leq m \mbox{ and }  \forall k \leq m (kg \in G_{\gen{n}} \rightarrow kg=0) \}. \]
 
Observe that each $A_{n,m}$ is closed. Since $G$ is torsion, $\bigcup_{n,m} A_{n,m}=G$, and there exist $n_0,m_0$ such that $A_{n_0,m_0}$ is not meager.  Let $H \leq G$ be the subgroup generated by all elements of order $\leq m_0$ in $G$.

Note that non-local compactness of $G$ implies that if  $A_{n_0,m_0} \subseteq H$ was compact, then it would be meager. Therefore there must exist $n_1 \geq n_0$ such that $A_{n_0,m_0} / G_{\gen{n_1}}$ is an infinite set.

We construct $L$ now. Put $L_0=\gen{h}$ for some fixed element $h \in A_{n_0,m_0}$, and suppose that a finite group $L_k$ has been already constructed so that 
\[ L_k \cap G_{\gen{n_1}} = \{0\}.\]

The group $L_k/G_{\gen{n_1}}$ is finite, while $A_{n_0,m_0}/G_{\gen{n_1}}$ is infinite. Moreover, $H/G_{\gen{n_1}}$ has bounded exponent, so it is a direct sum of finite cyclic groups (see, e.g., \cite[Theorem 17.2]{Fu}.) Therefore there is $h \in A_{n_0,m_0} \setminus L_k$ such that
$\langle L_k,h \rangle$ is isomorphic to $L \oplus \langle h \rangle$, and, by the definition of the sets $A_{n,m}$,
\[\left\langle h \right\rangle \cap (L_k+G_{\gen{n_1}}) = \{0 \}.\]

But then
\[ \left\langle L_k,h \right\rangle \cap G_{\gen{n_1}}=\{0\}, \]
so we can put $L_{k+1}=\gen{L_k,h}$, $L=\bigcup_k L_k$. Clearly, $L$ is infinite and discrete.
\end{proof}

%


\begin{corollary}
\label{leTorNonProTor}
Let $G$ be an abelian quasi-countable group which is torsion or such that in every neighborhood of $0$ there exists an element generating an infinite, discrete group. If $G$ is non-locally compact, there exist infinite, discrete $L_n \leq G$, $n \in \NN$, such that
\[ \prod_n L_n \leq G.\]

\end{corollary}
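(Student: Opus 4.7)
My plan is to build the $L_n$ inductively, nested in shrinking open subgroups $G_{\gen{m_{n-1}}}$ from a fixed adequate filtration, and then to show that the natural summation map $\phi \colon \prod_n L_n \to G$ defined by $\phi(h) = \sum_n h(n)$ is a topological embedding onto a closed subgroup.

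For the construction, observe that each $G_{\gen{m}}$ is an open normal subgroup of $G$, hence non-locally compact (otherwise $G$ itself would be locally compact at the identity), abelian, and quasi-countable. In the torsion case, $G_{\gen{m_{n-1}}}$ is also torsion and Lemma \ref{leTor} yields an infinite, discrete $L_n \leq G_{\gen{m_{n-1}}}$. In the other case, applying the hypothesis to the neighborhood $G_{\gen{m_{n-1}}}$ of $0$ in $G$ produces some $g \in G_{\gen{m_{n-1}}}$ with $L_n := \gen{g}$ infinite and discrete. In either case, discreteness of $L_n$ lets me choose $m_n > m_{n-1}$ so large that $L_n \cap G_{\gen{m_n}} = \{0\}$, which amounts to saying the projection of $L_n$ to $G_0 \times \ldots \times G_{m_n}$ is injective.

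Next, fix a compatible invariant metric $d$ on $G$. Since $h(n) \in G_{\gen{m_{n-1}}}$ and $\diam G_{\gen{m_{n-1}}} \to 0$, the partial sums of $\sum_n h(n)$ are Cauchy, so $\phi$ is well-defined; a basic open neighborhood $\{h : h(0) = \ldots = h(N) = 0\}$ of $0$ in $\prod_n L_n$ is sent into $G_{\gen{m_N}}$, so $\phi$ is a continuous homomorphism. The main step is to show $\phi$ is injective with closed image; Theorem 2.3.3 of \cite{Gao} then upgrades it to a topological isomorphism onto a closed subgroup of $G$. Suppose $\phi(h_k) \to g$. For any coordinate $i$ with $m_{n-1} < i \leq m_n$ the expression $\phi(h_k)(i) = \sum_j h_k(j)(i)$ collapses to a finite sum $h_k(0)(i) + \ldots + h_k(n)(i)$ (terms with $j > n$ lie in $G_{\gen{m_n}}$ and vanish at $i$), and since $G_i$ is discrete these sums stabilize as $k \to \infty$. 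An induction on $n$ that peels off already stabilized $h(0), \ldots, h(n-1)$, combined with $L_n \cap G_{\gen{m_n}} = \{0\}$, forces each $h_k(n)$ itself to stabilize to some $h(n) \in L_n$; the resulting $h \in \prod_n L_n$ satisfies $\phi(h) = g$ coordinate-wise. Injectivity is the specialization $g = 0$.

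The main obstacle is precisely this coordinate bookkeeping: the choices $L_n \leq G_{\gen{m_{n-1}}}$ and $L_n \cap G_{\gen{m_n}} = \{0\}$ are tailored so that $L_n$ is faithfully detected on the finite coordinate block $(m_{n-1}, m_n]$ and is invisible outside it, which lets $\sum_n h(n)$ behave like a disjoint coordinate sum even though the $L_n$ need not be pro-$p$ subgroups for distinct primes as would be required by a direct appeal to Lemma \ref{le:c0}.
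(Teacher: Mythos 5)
Your proposal is correct and follows essentially the same route as the paper: obtain an infinite discrete subgroup inside each of a rapidly shrinking sequence of the open subgroups $G_{\gen{m}}$ (via Lemma \ref{leTor} in the torsion case, via the hypothesis otherwise), arrange that each is faithfully detected on a finite coordinate block disjoint from the others, and show the summation map is an isomorphism onto a closed subgroup. Your coordinate-stabilization argument for closedness of the image just fills in the detail the paper delegates to the phrase ``a proof similar to that of Lemma \ref{le7}.''
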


\begin{proof}
Let $\{G_n \}$ be an adequate family for $G$. By Lemma \ref{leTor}, there exists an infinite, discrete $K_n \leq G_{\gen{n}}$ for every $n \in \NN$. For every $n$ fix $f_0(n) \in \NN$ such that
\[ K_n \cap G_{\gen{f_0(n)}} =\{0\},\]
and put $f(n)=(f_0)^n(0)$. Clearly,
\[ K_{f(n)} \cap K_{f(n')} = \{0\}\]
if $n \neq n'$, and the sum $\sum_n h(n)$ is convergent for every $h \in \prod_n K_{f(n)}$. A proof similar to that of Lemma \ref{le7} shows that every convergent sequence in $\overline{\bigoplus_n K_{f(n)}}$ is of the form $\sum_n h(n)$ for a unique $h \in \prod_n K_{f(n)}$. Thus, we can put $L_n=K_{f(n)}$.
\end{proof}

Let us state Kulikov's theorem.

\begin{theorem}[Kulikov]
Let $G$ be an abelian, torsion group. There exists a direct sum of cyclic groups $L \leq G$ such that $G / L$ is divisible.
\end{theorem}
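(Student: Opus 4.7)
The plan is to reduce Kulikov's theorem to the case of $p$-groups and then extract $L$ by a Zorn's lemma argument, following the classical proof. First, since $G$ is abelian and torsion, it decomposes as $G = \bigoplus_p G_p$ over primes $p$, where $G_p$ is the $p$-primary component. If the theorem is established for each $G_p$, say with $L_p \leq G_p$ a direct sum of cyclic subgroups and $G_p/L_p$ divisible, then $L := \bigoplus_p L_p$ is a direct sum of cyclic subgroups, and $G/L \cong \bigoplus_p (G_p/L_p)$ is a direct sum of divisible groups, hence divisible. So from now on one may assume $G$ is an abelian $p$-group.

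Next, I would apply Zorn's lemma to the poset $\mathcal{P}$ of pairs $(H, \mathcal{B})$, where $H \leq G$ is a pure subgroup (meaning $H \cap p^n G = p^n H$ for every $n$) and $\mathcal{B}$ is a collection of cyclic subgroups of $H$ witnessing a direct sum decomposition $H = \bigoplus_{C \in \mathcal{B}} C$, ordered by componentwise extension. Chains have upper bounds, since both purity and the direct-sum-of-cyclics property are preserved under directed unions. Pick a maximal element $(L, \mathcal{B})$ of $\mathcal{P}$.

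The real work lies in showing that $G/L$ is divisible. Suppose, toward a contradiction, that some coset $x + L$ of order $p$ in $G/L$ fails to be $p$-divisible, and assume $x$ has been chosen of minimal $p$-height in this coset. Using purity of $L$ together with the structure of the socle $G[p]$ as an $\mathbb{F}_p$-vector space, one can then replace $x$ by an element $y \in G$ such that $\gen{y}$ is cyclic, $L \cap \gen{y} = \{0\}$, and $L \oplus \gen{y}$ remains pure in $G$; this directly contradicts the maximality of $(L, \mathcal{B})$. The principal obstacle is exactly this last construction: naively adjoining a preimage of a non-$p$-divisible coset generally destroys purity, and one must correct $x$ by a carefully chosen element of $L$ whose $p$-heights along the filtration $\{p^n G\}$ match those of $x$ step by step. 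The detailed height bookkeeping is the standard nontrivial content of Kulikov's theorem, for which I would refer the reader to \cite{Fu} rather than reproduce it here.
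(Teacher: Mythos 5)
The paper does not actually prove this statement: it is quoted as Kulikov's classical theorem and used as a black box (the relevant background on abelian groups is drawn from Fuchs \cite{Fu}), so there is no in-paper argument to compare yours against. Your outline is the standard textbook proof and is essentially sound: the reduction to $p$-primary components is routine (a direct sum over primes of direct sums of cyclics is again a direct sum of cyclics, and a direct sum of divisible groups is divisible), and the Zorn's lemma argument on pure subgroups that are direct sums of cyclic groups --- equivalently, on maximal pure independent systems --- is precisely how one proves the existence of a basic subgroup, which is the stronger statement (with $L$ pure in $G$) that implies the version quoted here. Two cautions about the part you defer. First, to pass from ``$G/L$ is not divisible'' to the existence of an order-$p$ coset of finite height you need the standard lemma that a $p$-group in which every element of order $p$ has infinite height is divisible; this is an easy induction on the order of elements but is a genuine step. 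Second, the representative $x$ of the offending coset must be chosen \emph{proper} with respect to $L$, i.e., with height in $G$ equal to the (finite) height of $x+L$ in $G/L$; since the coset height dominates the height of every representative, this is a representative of \emph{maximal}, not minimal, height --- as written, your choice goes the wrong way, though the correction is exactly the ``height bookkeeping'' you delegate to \cite{Fu}. Given that the paper itself cites the theorem without proof, that delegation is entirely appropriate.
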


\begin{lemma}
\label{leKul}
Suppose that $G$ is an abelian, quasi-torsion group. If $G$ is non-locally compact, then either there exists $n$ such that $G_{\gen{n}}$ is quasi-dsc or there exists a closed subgroup $L \leq G$ such that $G/L$ is quasi-divisible and non-locally compact.
\end{lemma}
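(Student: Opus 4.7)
The approach is to apply Kulikov's theorem level-by-level along an adequate family. Fix $\{G_n\}$ an adequate family of torsion, countable, discrete groups for $G$, and let $\Gamma_n = G/G_{\langle n\rangle}$, which is a torsion countable discrete group (it embeds into $G_0\times\cdots\times G_n$). Write $p_n\colon G\to\Gamma_n$ and $q_n\colon \Gamma_{n+1}\to\Gamma_n$ for the natural projections.

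The plan is to inductively choose basic (i.e., dsc) subgroups $M_n\leq\Gamma_n$ such that $\Gamma_n/M_n$ is divisible, arranged coherently so that $q_n(M_{n+1})=M_n$. Given such a coherent family, set
\[ L = \{g\in G : p_n(g)\in M_n \text{ for every } n\},\]
which is closed in $G$. Since the $M_n$ form a surjective countable inverse system, a Mittag-Leffler style argument gives $p_n[L]=M_n$, so each quotient $(G/L)/(G_{\langle n\rangle}L/L)\cong \Gamma_n/M_n$ is divisible. This exhibits an adequate family of discrete countable divisible groups for $G/L$, i.e., $G/L$ is quasi-divisible.

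The dichotomy is then driven by local compactness of $L$. If $L$ is locally compact, Lemma~\ref{leComClo} (applied to the normal, closed subgroup $L$ of the non-locally compact group $G$) forces $G/L$ to be non-locally compact, and we have the second alternative. If $L$ is not locally compact, the goal is to show that some $G_{\langle n_0\rangle}$ must be quasi-dsc: since $L$ carries the adequate family of dsc groups $\{M_n\}$ coming from the construction, $L$ is itself quasi-dsc, and non-local compactness of $L$ combined with the way $L$ sits inside $G$ (intersected with the standard filtration $G_{\langle n\rangle}$) is used to transfer this dsc structure to a tail $G_{\langle n_0\rangle}$.

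The main obstacle I anticipate is the coherent inductive construction of the $M_n$. Given $M_n$, taking any basic subgroup $M_{n+1}$ of the torsion countable group $q_n^{-1}(M_n)$ (via Kulikov applied to $q_n^{-1}(M_n)$) yields a dsc $M_{n+1}$ with $q_n^{-1}(M_n)/M_{n+1}$ divisible, whence $\Gamma_{n+1}/M_{n+1}$ is divisible, but only the inclusion $q_n(M_{n+1})\subseteq M_n$ comes for free; upgrading this to equality requires exploiting Kulikov's freedom in choosing basic subgroups, together with the specific cyclic-lifting obstructions captured by the extension $0\to\ker q_n\to\Gamma_{n+1}\to\Gamma_n\to 0$. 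A secondary delicate point is the passage in the non-locally-compact-$L$ case from the quasi-dsc structure of $L$ to quasi-dsc-ness of $G_{\langle n_0\rangle}$ itself, which will require an argument along the lines of Lemma~\ref{leTor} and Corollary~\ref{leTorNonProTor}.
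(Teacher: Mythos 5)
Your overall strategy (Kulikov's theorem plus a dichotomy between a quasi-dsc tail and a quasi-divisible non-locally compact quotient) is the same as the paper's, but both points you flag as ``obstacles'' are genuine gaps, and the second is not merely delicate but false as stated. First, the coherence problem: you never produce basic subgroups $M_n\leq\Gamma_n$ with $q_n(M_{n+1})=M_n$, and nothing you write indicates how to. Taking a basic subgroup of $q_n^{-1}(M_n)$ gives only $q_n(M_{n+1})\subseteq M_n$ with divisible cokernel $M_n/q_n(M_{n+1})$, and a dsc group can have nontrivial divisible quotients, so equality does not follow; without it, $p_n[L]$ may be a proper subgroup of $M_n$ and $\Gamma_n/p_n[L]$ need not be divisible. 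The paper sidesteps this entirely: it applies Kulikov to the coordinate groups $G_n$ of the ambient product $\prod_n G_n$ (not to the partial quotients $G/G_{\gen{n}}$), obtaining dsc subgroups $L_n\leq G_n$ with $G_n/L_n$ divisible, and takes $L=\prod_n L_n$ (intersected with $G$). No compatibility between levels is needed, and $G/L$ is automatically a closed subdirect product of the divisible groups $G_n/L_n$.

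Second, your dichotomy is keyed to the wrong group. The branch ``if $L$ is not locally compact, then some $G_{\gen{n_0}}$ is quasi-dsc'' is false. Take $G=D^\NN\times\ZZ(p^\infty)^\NN$ with $D=\bigoplus_k\ZZ(p)$ discrete: the natural $L$ contains $D^\NN\times\{0\}$ and is non-locally compact, yet no tail $G_{\gen{n}}$ is quasi-dsc, since any dsc (hence reduced) quotient by an open normal subgroup must kill the divisible subgroup $\ZZ(p^\infty)$ sitting in a fixed coordinate, so such subgroups cannot form a neighborhood basis of the identity. The lemma still holds for this $G$, but only through the other alternative, as $G/L\cong\ZZ(p^\infty)^\NN$ is quasi-divisible and non-locally compact. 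The correct dichotomy, and the one the paper uses, concerns $G/L$ rather than $L$: either there is $m$ such that the image of $\pi_n[G_{\gen{m}}]$ in $G_n/L_n$ is finite for all $n\geq m$, in which case $\pi_n[G_{\gen{m}}]$ contains the dsc group $\pi_n[G_{\gen{m}}]\cap L_n$ with finite index and is therefore dsc by the Fuchs result the paper invokes, making $G_{\gen{m}}$ quasi-dsc; or these images are infinite cofinally often, which directly witnesses that $G/L$ is non-locally compact (a compact neighborhood would have finite projections to the discrete groups $G_n/L_n$). Your ``$L$ locally compact'' branch via Lemma~\ref{leComClo} is correct but covers only part of the second alternative.
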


\begin{proof}
Let $\{G_n \}$ be an adequate family for $G$ consisting of torsion groups. 
Applying Kulikov's theorem, for each $n$ fix a direct sum of cyclic groups $L_n$ such that $G_n/L_n$ is divisible. By \cite[\S 18, Theorem 1]{Fu}, if $A$ is a torsion, abelian group, $B \leq A$ is a dsc group, and $A/B$ is finite, then $A$ is a dsc group. Therefore there are two cases to consider:

\emph{Case 1.} There exists $m$ such that $\pi_n[G_{\gen{m}}]/L_n$ is finite for every $n \geq m$. By the above observation, $\pi_n[G_{\gen{m}}]$ is a direct sum of cyclic groups for every $n \geq m$, that is, $G_{\gen{m}}$ is quasi-dsc.

\emph{Case 2.} For every $m$ there exists $n >m$ such that $\pi_n[G_{\gen{m}}] / L_n$ is infinite. Then $G / \prod_n L_n$ is non-locally compact. Put $L=\prod_n L_n$.
\end{proof}

In the sequel we assume that every direct sum of finite cyclic groups $G$ comes equipped with a fixed basis $\{x_m\}$, that is, a family of elements $x_m \in G$ such that
\[ G= \bigoplus_m \gen{x_m}. \]

%
%
For $x \in G$, $A \subseteq G$, we define 
\[ \supp_G(x)=\{m \in \NN: \mbox{ the projection of } x \mbox{ onto } \gen{x_m} \mbox{ is non-trivial} \},\]
\[ \supp_G(A)=\bigcup\{ \supp(x): x \in A \}, \]
and, for $E \subseteq \NN$,
\[ x \upharpoonright E =\pi(x),   \,  A \upharpoonright E=\pi[A]\]
where $\pi$ is the projection of $G$ onto $\bigoplus_{m \in E} \gen{x_m}$.
%


%
%
%
%

\begin{lemma}
\label{thDsc}
Let $G$ be a non-locally compact, abelian, quasi-dsc group. There exist an adequate family $\{G_n\}$ for $G$,  $L_n \leq G_n$, and infinite $K_n \leq G_n$, $n \in \NN$, such that 

\begin{enumerate}
\item each $L_n$ is a direct summand in $G_n$,
\item $\prod_n K_n \leq G / \prod_n L_n$.
\end{enumerate}
\end{lemma}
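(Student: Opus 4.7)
The plan is to refine the adequate family $\{G_n\}$ so that each $G_n$'s dsc basis contains elements that admit lifts of minimal support in $G$; from these I would then assemble $L_n$ and $K_n$ as sub-sums of the basis of $G_n$. First I would refine the family by passing to a subsequence of quotients $G/G_{\gen{n_k}}$, as in Lemma \ref{leLocCom}, so that $H_n := \pi_n[G_{\gen{n-1}}]$ is infinite for every $n$. This uses only non-local compactness and preserves the dsc property, since quotients of countable dsc groups (by open subgroups) are dsc.

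The main structural step is a further refinement arranging, for each $n$, that at least two disjoint infinite sub-collections of the basis of $G_n$ lie in
\[
V_n := \pi_n[G_n^*], \qquad G_n^* := \{g\in G: g(k)=0 \text{ for all } k\ne n\}.
\]
To do this I would invoke Kulikov's theorem to produce basis-aligned direct-summand decompositions of $G_n$ compatible with $V_n$, and split the ``non-liftable'' part of $G_n$ off as its own coordinate, so that the remaining coordinate's basis lies in $V_n$ with room for both a direct summand and a disjoint infinite subgroup. Then I take $L_n$ to be the direct summand of $G_n$ generated by the first sub-collection and $K_n$ to be the infinite subgroup generated by the second, so that $L_n\cap K_n=0$ and $L_n,K_n\leq V_n$.

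Once the refinement is in hand, verification is a $c_0$-style argument analogous to Lemma \ref{le7}. For $(l_n)\in\prod_n L_n$ pick support-$n$ lifts $l_n^*\in G_n^*$ with $\pi_n(l_n^*)=l_n$; since $l_n^*\in G_{\gen{n-1}}$, the partial sums $\sum_{n\leq N}l_n^*$ are Cauchy in $G$, their limit lies in $G$, and coordinate-wise it equals $(l_n)$, giving $\prod_n L_n\leq G$. The same argument applied to $(k_n)\in\prod_n K_n$ produces $g\in G$ with $g(n)=k_n$ for all $n$, so $g\equiv (k_n)$ modulo $\prod_n L_n$, yielding $\prod_n K_n\leq G/\prod_n L_n$. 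The main obstacle is precisely the structural refinement: for a general adequate family $V_n$ can be trivial (as happens for compact groups, whose adequate families are inverse-limit-like), so non-local compactness must be converted into concrete product-like structure on the coordinates. This conversion is where the dsc hypothesis is essential, since it provides the basis-aligned direct-sum decompositions needed to place basis elements of $G_n$ inside $V_n$ while keeping the direct-summand condition on $L_n$.
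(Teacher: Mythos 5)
There is a genuine gap, and it sits exactly where you locate ``the main obstacle'': the structural refinement placing infinite sub-collections of the basis of $G_n$ inside $V_n=\pi_n[G_n^*]$ is not merely unproven but impossible in general. If it could be carried out, your $c_0$-argument would yield $\prod_n K_n\leq G$ itself (your lifts $k_n^*$ are supported on the single coordinate $n$, so their convergent sum is an element of $G$ agreeing with $(k_n)$ on every coordinate), and this stronger conclusion is false. Take $G=\varprojlim_n \ZZ(p^n)^{(\NN)}$ with coordinatewise reduction as bonding maps. This is a non-locally compact, abelian, quasi-dsc group (each $G_{\gen{n}}$ surjects onto the infinite discrete group $\ZZ(p)^{(\NN)}$, so no basic open subgroup is compact), but it is torsion-free, and any discrete subgroup $D$ satisfies $D\cap G_{\gen{n}}=\{0\}$ for some $n$ and hence embeds into the bounded-exponent group $\ZZ(p^n)^{(\NN)}$, forcing $D=\{0\}$. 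So $G$ has no nontrivial discrete subgroup, whereas $\prod_n K_n$ with $K_n\leq G_n$ infinite always contains one. For this $G$ one checks directly that $G_n^*=\{0\}$ for every $n$ (if $g(k)=0$ for some $k>n$ then $g(n)=0$ by compatibility), and no regrouping of coordinates can help, since the obstruction is intrinsic. The appeal to Kulikov's theorem does not address this: $G$ is already quasi-dsc, and Kulikov's theorem produces a dsc subgroup with divisible quotient, which has nothing to do with lifting basis elements of $G_n$ to single-coordinate elements of a subdirect product.

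The paper's proof is built around precisely this obstruction. Instead of single-coordinate lifts it constructs, by a pigeonhole argument on finite supports, elements $g_k\in G_{\gen{n_k-1}}$ with $g_k(n_k)\neq 0$ whose supports at each coordinate $n$ are pairwise disjoint across $k$. The coordinates of $g_k$ beyond $n_k$ are uncontrolled ``tails''; the disjointness condition guarantees that these tails land outside $F_n=\supp_{G_n}(K_n)$, and the quotient by $\prod_n L_n$ with $L_n=G_n\upharpoonright(\NN\setminus F_n)$ is exactly what erases them. In other words, the passage to $G/\prod_n L_n$ is not a convenience but the essential device, and any argument that produces $\prod_n K_n$ inside $G$ itself must fail. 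Your opening refinement (arranging $\pi_n[G_{\gen{n-1}}]$ infinite) and your closing convergence computation both match the paper, but they are applied to lifts that need not exist, so the core of the proof is missing.
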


\begin{proof}
Because $G$ is non-locally compact, by combining factors we can fix an adequate family $\{ G_n \}$ such that $\pi_{n+1}[G_{\gen{n}}]$ is infinite for every $n$. 

Fix a sequence $\{n_k\}$ of natural numbers such that each natural number appears in it infinitely many times.
We will find a sequence $g_k \in G$, $k \in \NN$, such that for each $k$:

\begin{enumerate}[(a)]
\item $g_k \in G_{\gen{n_k-1}}$,
\item $g_k(n_k) \neq 0$,
\item $\supp_{G_n}(g_{k}(n)) \cap \supp_{G_n}(g_{k'}(n))= \emptyset$ for any $n \leq k$ and $k' < k$.
\end{enumerate}

For $k=0$ fix any $g_0 \in G_{\gen{n_0-1}}$ with $g_0(n_0) \neq 0$, and suppose that $g_0, \ldots, g_{l-1} \in G$ satisfying Conditions (a)-(c) for $k<l$ have been already constructed.

For every $n<l$ define

\[ E_n=\bigcup_{k<l} \supp_{G_n}(g_k(n)).\]
%
For every $m \in \NN$ fix $h_m \in G_{\gen{n_l-1}}$ such that $h_m(n_l) \neq h_{m'}(n_l)$ if $m \neq m'$.
%
%
Since each $E_n$ is finite, by the pigeon hole principle there exist distinct $m,m' \in \NN$ such that
%

\[ h_{m}(n) \upharpoonright E_n = h_{m'}(n) \upharpoonright E_n \]
%
for $n<l$. 
%
%
%
Let $g_l=h_{m}-h_{m'}$. It is easy to see that $g_l$ satisfies Conditions (a)-(c).
 
%



For $n \in \NN$ define
\[ K_n=\gen{\{g_{k}(n): k \in \NN \mbox{ such that } n_k=n\}}, \]
\[ F_n= \supp_{G_n}(K_n), \]  
\[ L_n=G_n \upharpoonright (\NN \setminus F_n). \]

By Conditions (b) and (c), the group $K_n$ is an infinite subgroup of $G_n$. Moreover, obviously,
\[ \supp_{G_n}(K_n) \cap \supp_{G_n}(L_n)=\emptyset,\]
and $L_n$ is a direct summand in $G_n$.

We show that for every $h \in \prod_n K_n$ we can find $g \in G$ such that for every $n$
\[ g(n) \upharpoonright F_n=h(n).\]

Fix $h \in \prod_n K_n$.  For each $n$, find $k_0, \ldots, k_m \in \NN$, $l_0, \ldots l_m \in \NN$ such that
\[ n_{k_i}=n, \]
\[ h(n)=l_0 g_{k_0}(n)+ \ldots+ l_m g_{k_m}(n),\]
and put
\[ 
\gamma_n=l_0 g_{k_0}+ \ldots +l_m g_{k_m}.\]
%
%
%

Note that Condition (c) implies that
\[ \supp(g_k(n)) \cap F_n=\emptyset \]
for any $k$ with $n_k \neq n$. 
Also, Condition (a) gives that $\gamma_n \in G_{\gen{n-1}}$, so the limit $\gamma=\sum_n \gamma_n$ exists. Therefore
\[ \gamma(n) \upharpoonright F_n=h(n) \]
for every $n$. Since $h \in \prod_n K_n$ was arbitrary, we get
\[ \prod_n K_n \leq G/\prod_n L_n. \] 

\end{proof}

\begin{theorem}
\label{thProP}
Suppose that $G$ is an abelian, quasi-torsion group. If $G$ is non-locally compact, then there exists a closed $L \leq G$, and infinite, discrete groups $K_n$, $n \in \NN$, such that 
\[ \prod_n K_n \leq G / L. \]
\end{theorem}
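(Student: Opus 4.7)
My plan is to reduce to the earlier lemmas via a Sylow analysis. First, if every Sylow subgroup of $G$ is locally compact, then the corollary following Lemma~\ref{leLocCom} already gives the conclusion, so I may suppose some Sylow subgroup $H = H_{p_0}$ is non-locally compact. Because $H$ is closed in $G$, any closed $L \leq H$ is closed in $G$ and any product decomposition $\prod_n K_n \leq H/L$ also lies in $G/L$; thus it suffices to prove the theorem for $H$ in place of $G$. The group $H$ is abelian, quasi-torsion (its adequate family is inherited from that of $G$), non-locally compact, and pro-$p_0$ in the sense that every element of $H$ generates a pro-$p_0$ closed subgroup.

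Next, I would apply Lemma~\ref{leKul} to $H$. If some $H_{\gen{n}}$ is quasi-dsc, then---being open in the non-locally compact $H$---it is itself non-locally compact, so Lemma~\ref{thDsc} directly yields the required $\prod_m K_m \leq H_{\gen{n}}/\prod_m L_m \leq G/\prod_m L_m$. Otherwise, there exists a closed $L \leq H$ such that $H/L$ is quasi-divisible, non-locally compact, and still pro-$p_0$ (since quotients of pro-$p_0$ groups are pro-$p_0$); I replace $H$ by $H/L$ and assume we are in this setting.

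For the quasi-divisible pro-$p_0$ case, consider the closed subgroups $H[p_0^k] = \{g \in H : p_0^k g = 0\}$. Each such group is quasi-dsc: its adequate family consists of subgroups of $H_n[p_0^k] \cong \bigoplus \ZZ(p_0^k)$, and every subgroup of a bounded abelian group is a direct sum of cyclic groups by \cite[Theorem 17.2]{Fu}. If some $H[p_0^k]$ is non-locally compact, Lemma~\ref{thDsc} applied to it delivers the required product. Otherwise every $H[p_0^k]$ is locally compact, and I would exploit the injectivity of $\ZZ(p_0^\infty)$ as an abstract abelian group: for each $n$, any Pr\"ufer summand $P \leq H_n$ lifts along $\pi_n$ to an abstract-group section $\sigma : P \to H$, whose image is a discrete copy of $\ZZ(p_0^\infty)$ in $H$ because $\ker \pi_n$ is an open subgroup of $H$ intersecting $\sigma(P)$ trivially. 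A disjoint-support pigeonhole argument patterned on the proof of Lemma~\ref{thDsc}, but with Pr\"ufer summands replacing finite cyclic ones, then yields a closed direct product $\prod_n P_n \leq H$ of infinite discrete Pr\"ufer subgroups.

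The principal obstacle is this final subcase. One must ensure that the Pr\"ufer summand $P \leq H_n$ can be chosen inside $\pi_n[H_{\gen{n-1}}]$ (so the section $\sigma$ lands in $H_{\gen{n-1}}$, with $\diam \sigma(P) \to 0$), and that the images at distinct coordinates fit together into a genuine closed direct product rather than merely a direct sum. Guaranteeing the first point seems to require a fine analysis of how quasi-divisibility of $H$ interacts with the socle and the projections $\pi_n[H_{\gen{n-1}}]$; guaranteeing the second requires a Pr\"ufer-flavored analog of the support bookkeeping used in the proof of Lemma~\ref{thDsc}, where ``supports'' now have to track whole Pr\"ufer summands of the divisible coordinate groups.
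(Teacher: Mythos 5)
Your reductions down to the quasi-divisible case are sound: the preliminary Sylow split (handled by the corollary to Lemma~\ref{leLocCom}) is a harmless detour the paper does not take, and the appeals to Lemma~\ref{leKul} and Lemma~\ref{thDsc} match the paper's proof. The genuine gap is the final subcase, which you rightly flag as the principal obstacle, but it is not merely unfinished bookkeeping: the strategy of locating $\prod_n P_n$ \emph{inside} $H$ cannot work. First, injectivity of $\ZZ(p_0^\infty)$ is the wrong tool --- it extends homomorphisms \emph{into} a divisible group, it does not split surjections \emph{onto} one (that would need projectivity, and ${\rm Ext}(\ZZ(p^\infty),K)$ is typically nonzero, e.g.\ ${\rm Ext}(\ZZ(p^\infty),\ZZ)\cong\ZZ_p$), so your section $\sigma\colon P\to H$ need not exist. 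Second, and decisively: take $H=\prod_n \mathbb{Q}_p$, where $\mathbb{Q}_p\cong\varprojlim\bigl(\ZZ(p^\infty)\xleftarrow{\times p}\ZZ(p^\infty)\xleftarrow{\times p}\cdots\bigr)$ exhibits $\mathbb{Q}_p$ as quasi-divisible and quasi-torsion. This $H$ is abelian, quasi-divisible, pro-$p$, non-locally compact, and \emph{torsion-free}; every $H[p^k]$ is trivial, so you land precisely in your last subcase, yet $H$ contains no copy of $\ZZ(p^\infty)$ at all. The theorem holds for this $H$ only because one may pass to a quotient: $H/\prod_n\ZZ_p\cong\prod_n\ZZ(p^\infty)$.

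That quotient is exactly the idea your proposal is missing, and it is how the paper finishes. In the quasi-divisible case the paper runs a different dichotomy: either for every $n$ there is $f(n)>n$ and a Pr\"ufer summand $K'_{f(n)}$ of $H_{f(n)}$ onto which $\pi_{f(n)}[H_{\gen{n}}]$ projects surjectively --- in which case, writing $H_{f(n)}=K'_{f(n)}\oplus L'_{f(n)}$ and killing all the complementary summands, one gets $H/\prod_n L_n\cong\prod_n K_n$ with $K_n$ infinite discrete, and no lifting or support bookkeeping is needed --- or else, from some $m$ on, every such projection is finite cyclic, so $\pi_n[H_{\gen{m}}]$ embeds in a direct sum of cyclic groups, is itself a dsc group by \cite[Theorem 18.1]{Fu}, and $H_{\gen{m}}$ is quasi-dsc, so Lemma~\ref{thDsc} applies. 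Replacing your $H[p_0^k]$ split by this dichotomy, and the subgroup-hunting by the quotient construction, yields the paper's proof.
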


\begin{proof}
If there exists $n$ such that $G_{\gen{n}}$ is quasi-dsc, we can use Lemma \ref{thDsc}. Otherwise, we apply Lemma \ref{leKul} to find $L \leq G$ such that $H=G/L$ is quasi-torsion, quasi-divisible and non-locally compact. Let $\{H_n\}$ be an adequate family for $H$ consisting of torsion, divisible groups. Because $H$ is non-locally compact, we can assume that $\pi_m[H_{\gen{n}}]$ is infinite for every $n$ and $m > n$.

It is well known that every torsion, divisible group is isomorphic to a direct sum of  Pr\"{u}fer groups. Suppose that for every $n$ there exists $f(n)>n$, and a Pr\"{u}fer group $K'_{f(n)}$ which is a direct summand in $H_{f(n)}$, and is such that the projection of $\pi_{f(n)}[H_{\gen{n}}]$ on $K'_{f(n)}$ is surjective. Let $L'_{f(n)} \leq H_{f(n)}$ be such that
\[ H_{f(n)}=K'_{f(n)} \oplus L'_{f(n)}.\]

Put $K_{f^m(0)}=K'_{f^m(0)}$ for $m \in \NN$, and put $L_{f^m(0)}=L'_{f^m(0)}$ for $m \in \NN$, $L_m=H_m$ if $m$ is not of the form $f^n(0)$ for some $n$.
%
%
It is easy to see that 
\[ H/\prod_n L_n = \prod_n K_n, \]
and $\prod_m K_m$ is non-locally compact.

Otherwise, there exists $m$ such that for every $n>m$, the projection of $\pi_n[H_{\gen{m}}]$ on every Pr\"{u}fer group which is a direct summand in $H_n$, is a finite, cyclic group. Then $\pi_n[H_{\gen{m}}]$ is a subgroup of a direct sum of cyclic groups. But it is known (see, e.g., \cite[Theorem 18.1]{Fu}) that a subgroup of a direct sum of cyclic groups is also a direct sum of cyclic groups, so $H_{\gen{m}}$ is quasi-dsc. Again, we can apply Lemma \ref{thDsc}.
\end{proof}

\begin{theorem}
\label{thProC}
Let $G$ be an abelian quasi-countable group. If $G$ is non-locally compact, there exists a closed $L \leq G$, and infinite, discrete  groups $K_n$, $n \in \NN$, such that 
\[ \prod_n K_n \leq G / L. \]

Moreover, if $G$ is torsion or in every neighborhood of $0$ there exists an element generating an infinite, discrete group, we can put $L=\{0\}$.
\end{theorem}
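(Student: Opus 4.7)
My plan is to reduce Theorem \ref{thProC} to two cases already handled: the quasi-torsion case (Theorem \ref{thProP}) and the case in which every neighborhood of $0$ contains an element generating an infinite discrete subgroup (Corollary \ref{leTorNonProTor}). The key is to exploit the dichotomy of Lemma \ref{leElProCyc} --- each element of a non-archimedean group either generates an infinite discrete group or has compact pro-cyclic closure --- applied not to $G$ itself but to a suitable basic open subgroup.

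First I would dispose of the ``moreover'' clause: under either of its hypotheses Corollary \ref{leTorNonProTor} gives $\prod_n L_n \leq G$, so setting $L = \{0\}$ and $K_n = L_n$ suffices. For the main statement, fix an adequate family $\{G_n\}$ for $G$ and split on the following dichotomy: either every basic open subgroup $G_{\gen{n}}$ contains an element whose cyclic group is infinite and discrete in $G$, or some $G_{\gen{n_0}}$ contains no such element. In the first case the subgroups $G_{\gen{n}}$ form a neighborhood basis at $0$, so every neighborhood of $0$ contains such an element, and Corollary \ref{leTorNonProTor} again gives the conclusion with $L = \{0\}$.

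In the second case, Lemma \ref{leElProCyc} forces every $g \in G_{\gen{n_0}}$ to have $\clg{g}$ either finite cyclic or infinite pro-cyclic, hence compact in either case. Since $G_{\gen{n_0}}$ is Polish and non-archimedean with a compatible invariant metric, Lemma \ref{leCharPro} supplies an adequate family $\{H_n\}$ for it; the image of the compact set $\clg{g}$ under the projection to the discrete group $H_n$ is then finite, so $g(n)$ has finite order, and surjectivity of the projections forces each $H_n$ to be torsion. Thus $G_{\gen{n_0}}$ is itself an abelian quasi-torsion group. It is moreover non-locally compact, for a compact neighborhood of $0$ in the open subgroup $G_{\gen{n_0}}$ would be one in $G$ too. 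Theorem \ref{thProP} then yields a closed $L \leq G_{\gen{n_0}}$ and infinite discrete groups $K_n$ with $\prod_n K_n \leq G_{\gen{n_0}}/L$.

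Finally I would transfer the inclusion from $G_{\gen{n_0}}/L$ into $G/L$. As $G_{\gen{n_0}}$ is open, hence closed, in $G$, the subgroup $L$ is closed in $G$; and the preimage of $G_{\gen{n_0}}/L$ under the quotient $G \to G/L$ is $G_{\gen{n_0}} + L = G_{\gen{n_0}}$, which is closed, so $G_{\gen{n_0}}/L$ embeds as a closed subgroup of $G/L$. This yields $\prod_n K_n \leq G/L$ as required. The main obstacle, modest though it is, will be the derivation of the quasi-torsion property of $G_{\gen{n_0}}$ from the elementwise compact-closure condition; the remaining steps are essentially bookkeeping about passing to open subgroups and their quotients.
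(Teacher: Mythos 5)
Your proposal is correct and follows essentially the same route as the paper: handle the torsion/``small elements generate infinite discrete groups'' case via Corollary \ref{leTorNonProTor}, and otherwise use Lemma \ref{leElProCyc} to see that some basic open subgroup $G_{\gen{n_0}}$ is quasi-torsion (all cyclic closures compact, hence all coordinates of finite order) and apply Theorem \ref{thProP}. The only difference is that you spell out two steps the paper leaves implicit --- the derivation of the quasi-torsion property from the compactness of the closures $\clg{g}$, and the transfer of the inclusion $\prod_n K_n \leq G_{\gen{n_0}}/L$ to $G/L$ via closedness of the open subgroup $G_{\gen{n_0}}$ --- and both are carried out correctly.
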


\begin{proof}
If $G$ is torsion or  in every neighborhood of $0$ there exists an element generating an infinite, discrete group, we can apply Corollary \ref{leTorNonProTor}. Otherwise, by Lemma \ref{leElProCyc}, there exists an open subgroup $H$ in $G$ all of whose elements are pro-cyclic, which means that $H$ is quasi-torsion. Since $H$  is also non-locally compact, we can assume that $G$ itself is quasi-torsion, and apply Theorem \ref{thProP}. 
\end{proof}

Next, we would like to analyse what happens, if we require that $L$ in Theorem \ref{thProC} is supposed to be a pro-cyclic group. It turns out that this gives some more insight in the structure of abelian quasi-$p$ groups. Let us start with some auxiliary results.
 
\begin{lemma}
Let $G$ be a pro-$p$, pro-cyclic group, let $g \in G$ be a topological generator of $G$, and let $H \leq G$ be closed and non-trivial. Then $ng \in H$ for some $n>0$.
\end{lemma}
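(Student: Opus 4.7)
The plan is to reduce the statement to the classification of closed subgroups of the $p$-adic integers $\ZZ_p$. Specifically, I would build a continuous surjective homomorphism $\phi\colon\ZZ_p\to G$ with $\phi(1)=g$, pull $H$ back along $\phi$, and invoke the fact that the only closed subgroups of $\ZZ_p$ are $\{0\}$ and the subgroups $p^k\ZZ_p$ for $k\ge 0$ (see, e.g., \cite{RiZa}).

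To construct $\phi$, I would note that since $G$ is pro-$p$, it admits a neighborhood basis at $0$ consisting of open subgroups $N$ with $[G:N]$ a power of $p$; hence for each such $N$ there exists $k$ with $p^k g\in N$. Therefore the map $m\mapsto mg$ from $\ZZ$ into $G$ is uniformly continuous when $\ZZ$ carries the $p$-adic topology, and extends uniquely to a continuous homomorphism $\phi\colon\ZZ_p\to G$. Since $\phi(\ZZ_p)$ is compact, hence closed in $G$, and contains the dense subgroup $\gen{g}$, the map $\phi$ is surjective.

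Next, I would argue that $\phi^{-1}(H)$ is a non-trivial closed subgroup of $\ZZ_p$. If $\ker\phi\neq\{0\}$ this is immediate, since $\phi^{-1}(H)\supseteq\ker\phi$. Otherwise $\phi$ is a continuous bijection from the compact group $\ZZ_p$ onto the Hausdorff group $G$, and any non-zero $h\in H$ has a unique non-zero preimage which lies in $\phi^{-1}(H)$. Applying the classification, $\phi^{-1}(H)=p^k\ZZ_p$ for some $k\ge 0$; hence $p^k g=\phi(p^k)\in H$, and taking $n:=p^k>0$ finishes the proof.

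The one slightly delicate step is the construction of $\phi$; it rests entirely on the pro-$p$ assumption, which provides arbitrarily small open subgroups of $p$-power index, thereby guaranteeing that integer multiplication by $g$ is continuous in the $p$-adic metric.
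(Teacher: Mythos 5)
Your proof is correct, but it is organized differently from the paper's. The paper works internally: writing $G=\varprojlim \ZZ(p^n)$, it observes that the closed subgroups of such a group are linearly ordered under inclusion (by comparing their projections at each finite level) and that $\bigcap_n \clg{ng}=\{0\}$, whence any non-trivial closed $H$ must contain some $\clg{ng}$. You instead externalize the structure to $\ZZ_p$: you build the continuous surjection $\phi\colon\ZZ_p\to G$ with $\phi(1)=g$ (using that the open subgroups of a pro-$p$ group have $p$-power index to get $p$-adic uniform continuity of $m\mapsto mg$), pull $H$ back, and quote the classification of closed subgroups of $\ZZ_p$ as $\{0\}$ and $p^k\ZZ_p$. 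The two arguments rest on the same underlying fact --- a pro-$p$ pro-cyclic group is a quotient of $\ZZ_p$, whose closed subgroups form a chain --- but yours buys a cleaner citation of a standard classification at the cost of constructing $\phi$, while the paper's avoids that construction and instead proves the chain property and the triviality of $\bigcap_n\clg{ng}$ by hand. Both handle the degenerate finite case correctly (your $\ker\phi$ is then some $p^k\ZZ_p$ and the argument goes through unchanged), and your observation that non-triviality of $\phi^{-1}(H)$ splits into the cases $\ker\phi\neq\{0\}$ and $\phi$ injective is the right way to close the one potential gap in the pullback step.
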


\begin{proof}
By our assumption,
\[ G=\varprojlim \ZZ(p^n), \]
where $\ZZ(p^n)$ is the cyclic group of order $p^n$. For $n \in \NN$, let $\pi_n$ be the projection of $G$ onto its $n$th coordinate.
Observe that for any closed $H_1, H_2 \leq G$ either $H_1 \leq H_2$ or $H_2 \leq H_1$. Indeed,
\[ \pi_n(H_1) \leq \pi_n(H_2) \mbox{ or } \pi_n(H_2) \leq \pi_n(H_1) \]
for every $n \in \NN$, and 
\[ \pi_{n+1}(H_1) \leq \pi_{n+1}(H_2) \rightarrow \pi_n(H_1) \leq \pi_n(H_2). \]
By closedeness of $H_1$ and $H_2$, the claim follows.

Note also that
\[ \bigcap_n \overline{\left\langle ng\right\rangle}=\{0\}, \]
which implies that if $H \neq \{0\}$, then there exists $n>0$ such that $\overline{\left\langle ng\right\rangle} \leq H$. Since $H$ is closed, we get that $ng \in H$.

\end{proof}

\begin{corollary}
\label{coPom1}
Suppose that $G$ is an abelian quasi-$p$ group, and $g,g' \in G$. Then
\[ \clg{g} \cap \gen{g'} = \{0\} \]
implies that
\[ \clg{g} \cap \clg{g'}=\{0\}. \]
\end{corollary}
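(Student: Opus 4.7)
The plan is to argue by contrapositive: assume $H := \clg{g} \cap \clg{g'} \neq \{0\}$ and produce a nonzero element of $\clg{g} \cap \gen{g'}$. The key observation is that $H$ is a closed, nontrivial subgroup of $\clg{g'}$, so the lemma immediately preceding the corollary will apply once I verify that $\clg{g'}$ is pro-$p$ and pro-cyclic with $g'$ as a topological generator.

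Since $G$ is quasi-$p$, the element $g'$ is pro-$p$, so $\clg{g'}$ is pro-$p$. For pro-cyclicity, I split on whether $\clg{g'}$ is discrete. In the discrete case $\clg{g'}=\gen{g'}$, so
\[
\clg{g} \cap \gen{g'} = \clg{g} \cap \clg{g'} = H \neq \{0\},
\]
finishing that branch. Otherwise, Lemma \ref{leElProCyc} gives that $\clg{g'}$ is pro-cyclic, with $g'$ tautologically a topological generator (as $\clg{g'}=\overline{\gen{g'}}$).

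In the non-discrete branch, the preceding lemma supplies $n > 0$ with $ng' \in H \subseteq \clg{g}$. To finish, I need $ng' \neq 0$: since $\clg{g'}$ is non-discrete, $\gen{g'}$ cannot be finite (otherwise it would be closed, hence equal to $\clg{g'}$, making the latter discrete), so $g'$ has infinite order and $ng' \neq 0$. Thus $ng'$ is a nonzero element of $\clg{g} \cap \gen{g'}$, contradicting the hypothesis.

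I do not expect any genuine obstacle here: the corollary is essentially a direct application of the preceding lemma, with the only subtlety being the case split on discreteness of $\clg{g'}$, which is needed both to invoke Lemma \ref{leElProCyc} and to ensure that the multiple $ng'$ produced is actually nonzero.
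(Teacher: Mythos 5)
Your proof is correct and follows the route the paper intends: the corollary is stated without proof as an immediate consequence of the preceding lemma, applied to the closed nontrivial subgroup $H=\clg{g}\cap\clg{g'}$ of the pro-$p$, pro-cyclic group $\clg{g'}$. Your explicit handling of the discrete/finite-order case (where $\gen{g'}=\clg{g'}$) and the verification that $ng'\neq 0$ in the non-discrete case are exactly the details the paper leaves implicit.
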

 

%
%
%
%
%
%
%


\begin{theorem}
Let $G$ be an abelian quasi-$p$ group. Then one of the following holds:

\begin{enumerate}
\item There exists a pro-cyclic $H \leq G$ such that $G/H$ contains a clopen subgroup with bounded exponent.
\item The set $D \subseteq G^2$ defined by 
\[  (g,g') \in D \leftrightarrow \overline{\left\langle g \right\rangle} \cap \overline{\left\langle g' \right\rangle}= \{0\}\]
is comeager in $G^2$.
\end{enumerate}
\end{theorem}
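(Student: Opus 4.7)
The plan is to establish the dichotomy by contrapositive: assuming that $D$ fails to be comeager, I produce a pro-cyclic $H \leq G$ witnessing (1). The key simplifying device is Corollary~\ref{coPom1}, which lets me rewrite
\[
D^c \;=\; \bigcup_{n \geq 1} \bigl\{(g,g') \in G^2 : ng' \in \clg{g} \text{ and } ng' \neq 0\bigr\},
\]
trading an existential over the potentially complicated set $\clg{g}$ for one over $n \in \NN$. Using a countable neighborhood basis of the identity consisting of open subgroups (Lemma~\ref{leCharPro}), the set $\{(g,x) \in G^2 : x \in \clg{g}\}$ is $G_\delta$, being $\bigcap_m \bigcup_{k \in \ZZ} \{(g,x) : x - kg \in G_{\gen{m}}\}$; hence each piece of the union above is Borel, and Kuratowski--Ulam applies to $D^c$.

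Assume $D^c$ is non-meager. Kuratowski--Ulam then yields $g_0 \in G$ such that $\{g' : (g_0,g') \in D^c\}$ is non-meager in $G$, and Baire category furnishes some $n \geq 1$ for which
\[
W \;:=\; \{g' \in G : ng' \in H\}, \qquad H := \clg{g_0},
\]
is non-meager. The set $W$ is a closed subgroup of $G$, as the preimage of the closed subgroup $H$ under the continuous endomorphism $g' \mapsto ng'$, so by Pettis's theorem $W$ is in fact open. Put $V := W + H$; this is an open subgroup of $G$ containing $H$, and $V/H \cong W/(W \cap H)$. Since $nW \subseteq H \cap W$ by construction, $V/H$ has exponent dividing $n$, while $H$ is pro-cyclic by Lemma~\ref{leElProCyc} (the discrete case being trivial). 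This verifies (1).

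The main point where care is needed is the Borel bookkeeping behind Kuratowski--Ulam; this is where Corollary~\ref{coPom1} does the real work, turning an a priori analytic description of $D^c$ into a Borel one. Once the non-meager closed subgroup $W$ is in hand, the rest reduces to Pettis's theorem plus the immediate inclusion $nW \subseteq H$ that forces bounded exponent in $V/H$.
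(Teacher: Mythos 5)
Your proof is correct and follows essentially the same route as the paper's: Corollary~\ref{coPom1} to replace $\clg{g'}$ by $\gen{g'}$, Kuratowski--Ulam, and the observation that $W=\{g': ng'\in \clg{g_0}\}$ is a non-meager closed (hence open) subgroup with $(W+H)/H$ a clopen subgroup of $G/H$ of exponent dividing $n$. The only differences are organizational: you argue by contrapositive, you make explicit the Borel bookkeeping and the Pettis step that the paper leaves implicit, and by tracking the condition $ng'\neq 0$ you absorb the paper's separate first case (non-meager torsion subgroup) into the main argument.
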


\begin{proof}

If the group of torsion elements is non-meager in $G$, there exists $n$ such that the group generated by elements of order $\leq n$ is non-meager, and hence open in $G$. In this case, Point (1) holds for $H=\{0\}$.

%

If there exists $g \in G$ such that $H=\clg{g}$ is compact, and for non-meager many $g' \in G$ there is $n>0$ such that $ng' \in H$, then $G/H$ contains a clopen subgroup with bounded exponent. 

Otherwise, we have
\[ \forall g\in G \, \forall^* g' \in G  \, (\clg{g} \cap \gen{g'} = \{0 \}), \]
so, by Corollary \ref{coPom1},
\[ \forall g\in G \, \forall^* g' \in G  \, (\clg{g} \cap \clg{g'}= \{0 \}). \]

Then, by the Kuratowski-Ulam theorem, the set $D$, defined as in the statement of the lemma, is comeager.

%
%
%

%

%
%
%
%
\end{proof}

\section{Applications}

\begin{theorem}
\label{thAcPro}

Suppose that $G$ is an abelian quasi-countable group. Then $G$ is locally compact if and only if every continuous action of $G$ on a Polish space $X$ is reducible to an equivalence relation with countable classes.
\end{theorem}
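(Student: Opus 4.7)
The forward implication is Kechris's theorem, quoted in the introduction, so I only need to address the converse. My plan is to prove the contrapositive: assuming $G$ is non-locally compact, I will construct a continuous action of $G$ whose orbit equivalence relation is not reducible to one with countable classes.

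The starting point is Theorem \ref{thProC}, which yields a closed subgroup $L \leq G$ and infinite, discrete, countable groups $K_n$ such that $\prod_n K_n$ is a closed subgroup of $G/L$. Since each $K_n$ is infinite and discrete, it is in particular non-compact, so Solecki's Theorem \ref{thSol} furnishes a Polish space $X_n$ and a continuous action $\alpha_n$ of $K_n$ on $X_n$ together with an injective Borel reduction $f_n : 2^\NN \to X_n$ witnessing $E_0 \sqsubseteq E_{\alpha_n}$.

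Next I would form the coordinatewise product action $\beta$ of $\prod_n K_n$ on $\prod_n X_n$, whose orbit equivalence relation $E_\beta$ is characterised by $(x_n) E_\beta (y_n)$ iff $x_n E_{\alpha_n} y_n$ for every $n$. The map $F : (2^\NN)^\NN \to \prod_n X_n$ defined by $F((z_n))=(f_n(z_n))$ is then an injective Borel reduction of $E_0^\NN$ to $E_\beta$, so $E_0^\NN \sqsubseteq E_\beta$. Applying the Mackey--Hjorth Theorem \ref{thMac} to the inclusion $\prod_n K_n \leq G/L$ extends $\beta$ to a continuous action $\beta'$ of $G/L$ on some Polish space $Y \supseteq \prod_n X_n$ with $E_\beta \sqsubseteq E_{\beta'}$. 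Finally, composing $\beta'$ with the canonical projection $\pi : G \to G/L$ produces a continuous action $\alpha$ of $G$ on $Y$ whose orbit equivalence relation $E_\alpha$ coincides with $E_{\beta'}$ (since $\pi$ is surjective). Chaining the reductions gives $E_0^\NN \sqsubseteq E_\alpha$, and the fact recalled in Section 2 that $E_0^\NN$ is not reducible to any equivalence relation with countable classes finishes the proof.

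The genuinely hard content is Theorem \ref{thProC}, which has already been established in the paper; modulo that structural result, the argument is essentially a routine composition of Solecki's theorem, a product construction, and Mackey--Hjorth. The one delicate point to check is that $\prod_n K_n$ really appears as a \emph{closed} subgroup of $G/L$ (needed to apply Mackey--Hjorth), which is guaranteed by the paper's convention that all subgroup symbols denote closed subgroups.
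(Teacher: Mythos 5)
Your proposal is correct and follows essentially the same route as the paper's own proof: invoke Theorem \ref{thProC} to get $\prod_n K_n \leq G/L$, apply Solecki's theorem to each non-compact $K_n$, take the coordinatewise product action to reduce $E_0^\NN$, extend via Mackey--Hjorth to $G/L$, and pull back along the quotient map. No substantive differences.
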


\begin{proof}
The implication from left to right follows from \cite{Ke}. To show the other implication, fix $K_n$, $n \in \NN$, and $L$ as in Theorem \ref{thProC}.
By Theorem \ref{thSol}, there exist continuous actions $\alpha_n$ of $K_n$ on some Polish spaces $Y_n$ such that $E_0 \sqsubseteq E_{\alpha_n}$. Then
\[ E_0^\NN \sqsubseteq E_{\alpha''}, \]
where $\alpha''$ is the action of $\prod_n K_n$ on $\prod_n Y_n$ defined by
\[ \alpha''(g,y)=(\alpha_n(g(n),y(n))) \]
for $g \in \prod_n K_n$, $y \in \prod_n Y_n$. By Theorem \ref{thMac}, the action $\alpha''$ can be extended to a continuous action $\alpha'$ of $G/ L$ on some Polish space $X$ so that
\[ E_{\alpha''} \sqsubseteq E_{\alpha'}. \]
Define $\alpha$ as
\[ \alpha(g,x)=\alpha'(g/L,x), \]
Then 
\[ E_0^\NN \sqsubseteq E_\alpha,\]
so $E_\alpha$ is not reducible to an equivalence relation with countable classes.

%
%
%
%


%

%
\end{proof}

It is well known that every Polish group is isomorphic to some $G \leq \Iso(X)$, where $X$ is a Polish metric space, and $\Iso(X)$ is the group of all isometries of $X$ with the topology of pointwise convergence. Using the Pontryagin duality, a characterization of isometry groups of locally compact spaces due to S.Gao and A.Kechris, and a result of A. Kwiatkowska and S.Solecki, we are able to show the following.

\begin{theorem}
Let $X$ be a locally compact separable metric space, and let $G,L \leq \Iso(X)$ be Polish groups such that $G/L$ is abelian. Then $G/L$ is locally compact if and only if every continuous action of $G/L$ on a Polish space $X$ is reducible to an equivalence relation with countable classes.
\end{theorem}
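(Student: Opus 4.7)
The forward implication is Kechris's theorem, applied to the locally compact Polish group $G/L$. For the converse, I would assume $G/L$ is non-locally compact and aim to construct a continuous action of $G/L$ on a Polish space whose orbit equivalence relation admits $E_0^\NN$ as a Borel reduction, thereby forbidding reducibility to an equivalence relation with countable classes (by the fact recalled in Section 2).

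The plan is to find a closed subgroup $H \leq G/L$ which is abelian, quasi-countable, and non-locally compact, or alternatively a closed normal subgroup $N \leq G/L$ whose quotient $(G/L)/N$ has these properties. In the first case, Theorem \ref{thAcPro} produces a continuous action $\beta$ of $H$ on a Polish space with $E_0^\NN \sqsubseteq E_\beta$, and Theorem \ref{thMac} extends $\beta$ to an action $\alpha$ of $G/L$ with $E_\beta \sqsubseteq E_\alpha$, closing the argument. In the second case, Theorem \ref{thAcPro} applied to the quotient $(G/L)/N$, followed by composition with the projection $G/L \to (G/L)/N$, produces the required action on $G/L$ with identical orbit equivalence relation.

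The existence of $H$ (or $N$) under the hypothesis is the substantive content, and it uses the three ingredients announced in the paragraph preceding the theorem. The Gao-Kechris characterization of Polish groups embeddable in $\Iso(Z)$ for $Z$ locally compact separable metric restricts $G$ (and hence the quotient $G/L$) to a well-controlled class, typically described as an inverse limit of locally compact groups or via a compatible metric with a prescribed boundedness structure. The Kwiatkowska-Solecki theorem, applied in the abelian setting, should then yield a decomposition of $G/L$ into a locally compact part and a non-archimedean part. Any abelian non-archimedean Polish group is automatically quasi-countable by Lemma \ref{leCharPro}, since the abelianness furnishes a compatible two-sided invariant metric. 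Pontryagin duality enters to guarantee that an extension of a locally compact abelian group by a locally compact abelian group is itself locally compact, which together with the assumption that $G/L$ is non-locally compact forces the non-archimedean piece to carry the non-local compactness. Lemma \ref{leComClo} is then the right tool to confirm that quotienting by the locally compact piece preserves non-local compactness.

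The main obstacle will be verifying the precise transfer of the Gao-Kechris and Kwiatkowska-Solecki results from the hypothesis ``$G, L \leq \Iso(X)$'' to the quotient $G/L$, and making the appeal to Pontryagin duality rigorous in the abelian Polish (not merely locally compact) setting. Once the decomposition of $G/L$ is secured, the remaining steps are a direct assembly of Theorem \ref{thAcPro}, Theorem \ref{thMac}, Lemma \ref{leCharPro}, and Lemma \ref{leComClo}.
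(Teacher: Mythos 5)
Your outer frame---Kechris for the forward direction, and for the converse manufacturing an action with $E_0^\NN \sqsubseteq E_\alpha$ by locating a suitable closed subgroup or quotient and then assembling Theorem \ref{thAcPro}, Theorem \ref{thMac} and Lemma \ref{leComClo}---agrees with the paper. But the step you explicitly defer, producing a non-locally compact abelian quasi-countable closed subgroup or quotient, is where the entire proof lives, and the structural claim you offer in its place is not correct. First, the roles of the two cited results are reversed in your sketch: \cite[Corollary 1.3]{SolKw} is what lets one replace $G/L$ by a closed subgroup of $\Iso(X')$ for a new locally compact separable $X'$ (i.e., assume $L$ trivial), while \cite[Theorem 6.3]{KeGao} supplies the structural embedding $G \leq \prod_n (K_n \ltimes L_n^\NN)$ with $K_n \leq S_\infty$ closed and $L_n$ locally compact. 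Second, and more seriously, it is simply false that the decomposition always pushes the non-local compactness onto a non-archimedean piece. Setting $H=\bigcap_n H_n$ for the clopen subgroups $H_n$ generated by a neighborhood basis, the easy case is when $H$ is locally compact (then $G/H$ is quasi-countable and non-locally compact by Lemma \ref{leComClo}, as you anticipate). When $H$ is non-locally compact one has $H \leq (\prod_n K_n) \oplus (\prod_n L_n)$, and after disposing of the $\prod_n K_n$ part and the compact factors $C_n$ coming from the $\mathbbm{R}^k \oplus C$ open-subgroup theorem for locally compact abelian groups (this is the actual use of Pontryagin duality---not any claim about extensions, which in any case is just the contrapositive of Lemma \ref{leComClo}), one can be left with $H_0 = H \cap \prod_n R_n$ non-locally compact, where by Brown--Higgins--Morris $H_0 \cong \mathbbm{Z}^k \oplus \mathbbm{R}^l$ with $k$ or $l$ infinite. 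When $l=\NN$ this group is not non-archimedean and not quasi-countable, so Theorem \ref{thAcPro} does not apply to it; the paper handles this case directly by applying Solecki's Theorem \ref{thSol} to each factor of the infinite product and then extending via Theorem \ref{thMac}. (One could alternatively pass to the closed copy of $\mathbbm{Z}^\NN$ inside $\mathbbm{R}^\NN$, but some such additional case is unavoidable, and your sketch does not detect it.)
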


\begin{proof}
By \cite[Corollary 1.3]{SolKw}, there exists a locally compact separable metric space $X$ such that $G/L \leq \Iso(X)$, so we can assume that $L$ is trivial and $G$ is abelian. 
As before, we will prove the implication from right to left by showing that if $G$ is non-locally compact, then there exists a continuous action $\alpha$ of $G$ such that
\[ E_0^\NN \sqsubseteq E_\alpha.\]

By \cite[Theorem 6.3]{KeGao}, there exist closed, subgroups $K_n \leq S_\infty$, and locally compact groups $L_n$,  $n \in \NN$, such that
\[ G \leq \prod_n (K_n \ltimes L_n^\NN), \]
where each $K_n$ acts on $L_n^\NN$ by permuting coordinates. Since $G$ is abelian, the definition of the semidirect product gives that we can assume without loss of generality that all the groups $K_n$ are abelian.

Fix a neighborhood basis $\{U_n\}$ at the identity in $G$, and let $H_n$, $n \in \NN$, be the clopen groups generated by $U_n$. If 
\[ H=\bigcap_n H_n, \]
is locally compact, then, by Lemma \ref{leComClo}, $G/H$ is non-locally compact, and the family $\{(G/H)/H_n)\}$ witnesses that $G/H$ is an abelian quasi-countable group. Thus, we can apply Theorem \ref{thAcPro} to $G/H$. As every action of $G/H$ gives rise to an action of $G$, this finishes the proof of the case that $H$ is locally compact.

Otherwise, $H \leq G$ is non-locally compact, and, clearly, the actions of $K_n$ on $L_n^\NN$ are trivial when restricted to elements of $H$. Therefore
\[ H \leq (\prod_n K_n) \oplus (\prod_n L_n), \]
and, since $G$ is abelian, we can assume that $L_n$ are abelian as well.

If the projection of $H$ on $\prod_n K_n$ is non-locally compact, we apply Theorem \ref{thAcPro}. Otherwise, the projection of $H$ on $\prod_n L_n$ must be non-locally compact. It is a well known result in the theory of locally compact abelian groups, following from the Pontryagin duality, that every such group has an open subgroup topologically isomorphic to $\mathbbm{R}^k \oplus C$ for some $k\geq 0$ and compact group $C$ (see, e.g., \cite[Theorem 25]{Mor}). For every $L_n$, fix such $R_n=\mathbbm{R}^k$, and $C_n=C$.

Suppose that $H \cap \prod_n R_n$ is locally compact. Then $H/ \prod_n R_n$ is non-locally compact, and, since all $C_n$ are compact, $H/ \prod_n(R_n \oplus C_n)$ is non-locally compact as well. But each $R_n \oplus C_n$ is open in $L_n$, so $L_n / (R_n \oplus C_n)$ is countable. In other words, $H/ \prod_n(R_n \oplus C_n)$ is a quasi-countable group, and we can apply Theorem \ref{thAcPro} once more.

The last possibility to consider is that $H_0=H \cap \prod_n R_n$ is non-locally compact. By \cite[Theorem 2]{BrHiMo},
\[ H_0 \cong \mathbbm{Z}^k \oplus \mathbb{R}^l, \]
where, $k,n \in \NN \cup \{\NN\}$, and, by non-local compactness of $H_0$, at least of of them is $\NN$. Applying Theorem \ref{thSol} to each element of the infinite product, we can find an action $\beta$ of $H_0$ such that
\[ E_0^\NN \sqsubseteq E_\beta. \]

Using Theorem \ref{thMac}, we extend $\beta$ to an action $\alpha$ of $G$ so that
\[ E_0^\NN \sqsubseteq E_\alpha. \]
\end{proof}



\end{document}